\title{On VC-density in VC-minimal theories}
\author{Vincent Guingona}
\address{Ben-Gurion University of the Negev}
\urladdr{http://www.math.bgu.ac.il/~guingona/}
\email{guingona@math.bgu.ac.il}
\date{\today}
\thanks{2010 \emph{Mathematics Subject Classification}. 03C45. \\ 
        \indent \emph{Key words and phrases}. VC-density, VC-minimal.}
\newtheorem{thm}{Theorem}[section]
\newtheorem{cor}[thm]{Corollary}
\newtheorem{lem}[thm]{Lemma}
\newtheorem{ques}[thm]{Open Question}
\theoremstyle{remark}
\newtheorem{rem}[thm]{Remark}
\theoremstyle{definition}
\newtheorem{defn}[thm]{Definition}
\newcommand{\concat}{{}^{\frown} }
\newcommand{\opp}{\mathrm{opp} }
\newcommand{\nunlhd}{\not\hspace{-5pt}\unlhd\hspace{4pt} }
\newcommand{\VCd}{\mathrm{vc}^* }
\newcommand{\dist}{\mathrm{dist} }
\newcommand{\diff}{\mathrm{diff} }
\newcommand{\acl}{\mathrm{acl} }
\newcommand{\dcl}{\mathrm{dcl} }
\newcommand{\Mon}{\mathcal{U} }
\newcommand{\Td}{\mathcal{T} }
\newcommand{\Fr}{\mathcal{F} }
\newcommand{\eq}{\mathrm{eq} }
\newcommand{\BigO}{\mathcal{O} }
\newcommand{\Vir}{\mathcal{V} }
\newcommand{\Pow}{\mathcal{P} }
\begin{document}

\begin{abstract}
 We show that any formula with two free variables in a VC-minimal theory has VC-codensity at most two.  Modifying the argument slightly, we give a new proof of the fact that, in a VC-minimal theory where $\acl^\eq = \dcl^\eq$, the VC-codensity of a formula is at most the number of free variables (from \cites{adhms, gl}).
\end{abstract}

\maketitle

\section{Introduction}\label{Sec_Intro}

There is a strong connection between the study of NIP theories from model theory and the study of Vapnik-Chervonenkis dimension and density from probability theory.  Indeed, as first noted in \cite{l92}, a theory has NIP if and only if all definable families of sets have finite VC-dimension.  Moreover, a definable family of sets has finite VC-dimension if and only if it has finite VC-density.  Although VC-dimension provides a reasonable measure of the ``complexity'' of a definable set system in an NIP theory, it is highly susceptible to ``local effects.''  Indeed a theory that is relatively ``tame'' globally but locally codes the power set of a large finite set will have high VC-dimension.  On the other hand, VC-density is, in some respect, a much more natural measurement of complexity, impervious to such local complexity.  Moreover, it is closely related to other measurements of complexity in NIP theories, most notably, the dp-rank (see, for example, \cites{adhms, gh, kou}).

In the pair of VC-density papers by M. Aschenbrenner, A. Dolich, D. Haskell, D. Macpherson, and S. Starchenko \cites{adhms, adhms2}, significant progress was made toward understanding VC-density in some NIP theories.  Bounds were given for VC-density in weakly o-minimal theories, strongly minimal theories, the theory of the $p$-adics, the theory of algebraically closed valued fields, and the theory of abelian groups.  However, many questions were left open.  Perhaps the most interesting is the relationship between dp-rank and VC-density.

\begin{ques}\label{Ques_dprankVCden}
 In a theory $T$, is it true that a partial type $\pi(y)$ has dp-rank $\le n$ if and only if every formula $\varphi(x; y)$ has VC-density $\le n$ with respect to $\pi(y)$?
\end{ques}

A simpler question, implied by this and the subadditivity of the dp-rank \cite{kou}, is the following:

\begin{ques}\label{Ques_dpminVCden}
 If $T$ is dp-minimal and $\varphi(x;y)$ is any formula, then does $\varphi$ have VC-density $\le |y|$?
\end{ques}

Both of these appear to be very difficult questions to answer.  So, we can ask an ostensibly easier question, replacing dp-minimality by something stronger.

VC-minimality was first introduced by H. Adler in \cite{adl}.  A theory is VC-minimal if all definable families of sets in one dimension are ``generated'' by a collection of definable sets with VC-codimension $\le 1$.  It turns out that all VC-minimal theories are indeed dp-minimal.  Moreover, due to the close relationship between VC-dimension and VC-density, something can be said about VC-density in VC-minimal theories, to some degree.  However, the primary question on computing VC-density in VC-minimal theories is still open.

\begin{ques}\label{Ques_vcminVCden}
 If $T$ is VC-minimal and $\varphi(x;y)$ is any formula, then does $\varphi$ have VC-density $\le |y|$?
\end{ques}

In this paper, we provide partial solutions to this question.  The primary result is the following, which says this holds when $|y| \le 2$.

\begin{thm}\label{Thm_VCminVCden}
 If $T$ is VC-minimal and $\varphi(x;y)$ is any formula with $|y| \le 2$, then $\varphi$ has VC-density $\le 2$.
\end{thm}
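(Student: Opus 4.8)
The plan is to reduce everything to a counting statement and then feed it two applications of VC-minimality, one for each variable in $y$. Unwinding the definition of $\VCd$, it suffices to show: for every finite $A \subseteq \Mon^{|x|}$ with $|A| = t$, the number of subsets of $A$ of the form $A_b := \{a \in A : \Mon \models \varphi(a;b)\}$, as $b$ ranges over $\Mon^{|y|}$, is $O(t^2)$. The input from VC-minimality is the \emph{uniform ball decomposition}: regarding $\varphi$ as a formula in the single variable $y_1$ with the other variables as parameters, there are finitely many $\psi_1,\dots,\psi_r \in \Psi$, a bound $N$, and $\emptyset$-definable finite-to-one correspondences $G_1,\dots,G_r$ (which may be taken to be honest definable functions when $\acl^\eq = \dcl^\eq$) such that, for all $a$ and all parameter values $c$, the one-variable set defined by $\varphi$ at $(a,c)$ is a Boolean combination, in a pattern independent of $(a,c)$, of at most $N$ balls $\psi_p(\Mon;d)$ with $d \in G_p(a,c)$. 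I also use the basic fact --- immediate from directedness of $\Psi$ --- that a family of $m$ balls cuts $\Mon$ into only $O(m)$ pieces (the generated Boolean subalgebra of $\mathcal{P}(\Mon)$ has $O(m)$ atoms), whereas an unstructured family of $m$ sets could have up to $2^m$.

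The case $|y|=1$ is the base case and the engine for $|y|=2$; in fact it yields $\VCd(\varphi)\le 1$. Fix finite $A$, $|A|=t$. For each $a\in A$ the set $\varphi(a;\Mon)$ is a Boolean combination of the $\le rN$ balls in $\mathcal{B}_a := \{\psi_p(\Mon;d) : p\le r,\ d\in G_p(a)\}$, so $\mathcal{B} := \bigcup_{a\in A}\mathcal{B}_a$ is a family of $O(t)$ balls; being a directed family it partitions $\Mon$ into $O(t)$ cells, and $\varphi(a;\cdot)$ is constant on each cell for every $a\in A$. Hence $A_{b_1}$ depends only on the cell of $b_1$, giving $O(t)$ distinct traces.

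For $|y|=2$, write $b=(b_1,b_2)$ and fix finite $A$, $|A|=t$. For each $a\in A$ and each $b_2$, $\varphi(a;\Mon,b_2)$ is a Boolean combination of $\le rN$ balls $\psi_p(\Mon;d)$, $d\in G_p(a,b_2)$; as $b_2$ varies the endpoints of these balls sweep out at most $m_0=O(1)$ ``curves'' $\Gamma^a_1,\dots,\Gamma^a_{m_0}\subseteq\Mon^2$, each finite-to-one over the $b_2$-axis with fibres of size $\le N$, together with $O(1)$ ``horizontal'' boundary pieces (at $b_2$-values where a ball degenerates to $\emptyset$ or $\Mon$). The crux is that any two of these curves, $\Gamma^a_i$ and $\Gamma^{a'}_{i'}$, meet in only $O(1)$ places: whether the two balls $\psi(\Mon;G(a,b_2))$ and $\psi'(\Mon;G'(a',b_2))$ stand in a prescribed one of the four directed relations is governed by a \emph{fixed} formula in $b_2$ with parameters $a,a'$, which by the uniform ball decomposition (now in the variable $b_2$) defines a union of $O(1)$ Swiss cheeses; and the directed relation between two balls constrains the relative order of their endpoints, so this order --- hence the crossing pattern of the two curves --- changes only $O(1)$ times. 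Therefore the arrangement in $\Mon^2$ of the $O(t)$ curves $\{\Gamma^a_i : a\in A,\ i\le m_0\}$ (and the $O(t)$ horizontal pieces) has only $O(t^2)$ cells --- the usual incremental count, where the $\nu$-th curve is cut into $O(\nu)$ arcs by the earlier ones and so creates $O(\nu)$ new cells. Within each cell every atomic condition ``$b_1\in\psi_p(\Mon;d)$'' ($d\in G_p(a,b_2)$, $a\in A$) is constant --- such a condition can change only across one of these curves --- so $A_{(b_1,b_2)}$ is constant on each cell. Hence there are $O(t^2)$ traces, i.e.\ $\VCd(\varphi)\le 2$.

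The step I expect to be the real obstacle is the $O(t^2)$ bound on the number of cells of the curve arrangement: this is where directedness of $\Psi$ is used a \emph{second} time, and where one must run the arrangement count combinatorially --- replacing ``cell'' by ``maximal set of $(b_1,b_2)$ on which all the relevant atomic conditions take fixed truth values'' --- since there is no topology on $\Mon^2$ to appeal to, and one must also check that the endpoint ``curves'' and degeneracy loci really do capture every place where some $\varphi(a;\cdot,\cdot)$ can change. For the abstract's second assertion: if $\acl^\eq=\dcl^\eq$ the $G_p$ are genuine definable functions, so the endpoint curves are function graphs, and one can then peel off $y_3,\dots,y_{|y|}$ one variable at a time, at each stage replacing curves by $\emptyset$-definable ``surfaces'' of one lower dimension and re-running the same cell count; the resulting induction (each variable contributing $1$) gives $\VCd(\varphi)\le|y|$. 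Without $\acl^\eq=\dcl^\eq$ the $\acl$-ambiguity is harmless for curves --- it only multiplies the curve count by a constant --- but it compounds uncontrollably once the reduction is iterated, which is exactly why the unconditional bound is obtained only for $|y|\le 2$.
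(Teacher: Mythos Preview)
Your base case $|y|=1$ is fine and matches the paper's argument that $\pi_T(1)=1$. The $|y|=2$ argument, however, has a genuine gap at exactly the point you flag as ``the real obstacle,'' and the difficulty is more serious than you indicate.

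The central problem is that the objects $\Gamma^a_1,\dots,\Gamma^a_{m_0}$ do not exist as separate definable sets in a general VC-minimal theory. You describe $G_p$ as an ``$\emptyset$-definable finite-to-one correspondence,'' which is accurate, but the upshot is that only the \emph{set} $G_p(a,b_2)$ is definable from $(a,b_2)$; its elements carry no definable labelling. As $b_2$ varies there is no way to say which component is ``the $i$-th one,'' so sentences like ``whether the two balls $\psi(\Mon;G(a,b_2))$ and $\psi'(\Mon;G'(a',b_2))$ stand in a prescribed directed relation'' are not well-formed --- $G(a,b_2)$ is a set of up to $N$ parameters, not a single one. What \emph{is} first-order in $b_2$ over $a,a'$ is the isomorphism type of the whole forest of components of both formulas, and this is precisely what the paper encodes with its formulas $\psi_{\gamma,n,m,\mu}$; controlling how that pattern evolves without a labelling is where the work lies. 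Your remark that under $\acl^\eq=\dcl^\eq$ ``the $\acl$-ambiguity is harmless for curves --- it only multiplies the curve count by a constant'' is not justified: a constant-sized orbit of curves is still not a single curve, and the incremental count needs individual curves.

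Even granting well-defined curves, the incremental arrangement count (``the $\nu$-th curve is cut into $O(\nu)$ arcs and so creates $O(\nu)$ new cells'') imports topological facts --- connectedness of arcs, and that an arc splits exactly one cell --- which have no meaning in, say, ACVF, where balls are clopen and there is no ambient topology on $\Mon^2$ supporting a Jordan-curve style argument. You note that the count must be run ``combinatorially,'' but you do not supply the replacement, and this is the heart of the matter. The paper's substitute is entirely different in character: it puts a \emph{convex linear order} on the virtual $\delta_1$-type space and proves that along any monotone enumeration $p_0<\dots<p_m$ the total symmetric-difference distance $\sum_i\dist(p_i,p_{i+1})$ is at most $2|D|$ (Lemma~\ref{Lem_SumDist2}). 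This lets one bound $\bigl|\bigcup_p \Vir(p,B)\bigr|$ by $|\Vir(p_0,B)|$ plus the total distance, which is what turns the na\"{\i}ve $O(|B|^3)$ into $O(|B|^2)$. To make this work when components are unlabelled, the paper builds an auxiliary parameter set $\beta\subseteq B^{2^{N_0}+1}$ of size $O(|B|^2)$ (Lemmas~\ref{Lem_SSmall} and~\ref{Lem_Conclusion}) over which a $\Psi$-type already pins down the virtual $\Gamma_0$-type space; the construction of $\beta$ is an iterated refinement that tracks where the number of \emph{irreducibles} jumps, again without ever naming them.

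In short, your outline captures the correct heuristic --- pairs $(a,a')$ interact in $O(1)$ ways in the $b_2$-direction, and one wants to leverage this into an $O(t^2)$ cell bound --- but the geometric implementation via curve arrangements does not survive outside the (weakly) o-minimal setting. The paper replaces it with a purely order-theoretic counting device (the convex ordering and the sum-of-distances lemma) together with a careful encoding of component patterns that never requires labelling the components.
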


In particular, since the theory of algebraically closed valued fields is VC-minimal, this provides a new result for this theory.

Although this theorem seems quite distant from answering Open Question \ref{Ques_vcminVCden}, the proof is unique, employing a combinatorial method for dealing with directed systems, and may be of independent interest.  For example, we will discuss using the method to provide an entirely new proof for the weakly o-minimal case in \cite{adhms}.

\section{VC-Codensity and Directedness}\label{Sec_VCCoden}

\subsection{VC-codensity}

Fix $T$ a complete first-order theory in a language $L$ with monster model $\Mon$.  If $x$ is a tuple of variables, let $|x|$ denote the length of $x$ and let $\Mon_x$ denote the set $\Mon^{|x|}$ (more generally, if $L$ is multisorted, we let $\Mon_x$ be the elements in $\Mon$ of the same sort as $x$).

If $\Phi(x;y) := \{ \varphi_i(x;y) : i \in I \}$ is a set of formulas and $B \subseteq \Mon_y$, let $S_\Phi(B)$ be the \emph{$\Phi$-type space over $B$}.  That is, $S_\Phi(B)$ is the set of all maximal consistent subsets of
\[
 \{ \varphi_i(x; b)^t : b \in B, i \in I, t < 2 \}.
\]
Here we use the standard notation $\theta(x)^1 = \theta(x)$ and $\theta(x)^0 = \neg \theta(x)$ for formulas $\theta(x)$.  Moreover, if $P$ is an expression that can either be true or false, then we will denote $\theta^P = \theta$ if $P$ is true and $\theta^P = \neg \theta$ if $P$ is false.  For each $p \in S_\Phi(B)$, there exists a unique $s \in {}^{B \times \Phi} 2$ (i.e., $s : B \times \Phi \rightarrow \{ 0, 1 \}$), namely the one such that
\[
 p(x) = \{ \varphi(x; b)^{s(b,\varphi)} : b \in B, \varphi \in \Phi \}.
\]
Hence, $|S_\Phi(B)| \le 2^{|B| \cdot |\Phi|}$.  However, in interesting cases (i.e., when $\Phi$ has NIP), there is a polynomial bound instead of an exponential one.  This leads to the following definition.

\begin{defn}[VC-codensity]\label{Defn_VCcoDen}
 Given a finite set of formulas $\Phi(x; y)$ and a real number $\ell$, we say that $\Phi$ has \emph{VC-codensity $\le \ell$}, denoted $\VCd(\Phi) \le \ell$, if there exists $K < \omega$ such that, for all finite $B \subseteq \Mon_y$,
 \[
  | S_\Phi(B) | \le K \cdot |B|^\ell.
 \]
 If no such number exists, say the VC-codensity is infinite ($\VCd(\Phi) = \infty$).
\end{defn}

A set of formulas $\Phi(x; y)$ has \emph{VC-density} $\le \ell$ if $\Phi^\opp(y;x) := \Phi(x;y)$ has VC-codensity $\le \ell$ (when we exchange the parametrization).  For simplicity, we will only consider VC-codensity in this paper.

Consider the function $\pi_T : \omega \rightarrow \mathbb{R}_{\ge 0} \cup \{ \infty \}$ we call the \emph{VC-codensity function}, namely
\[
 \pi_T(n) = \sup \{ \VCd(\Phi) : \Phi(x;y) \text{ is finite with } |x| = n \}.
\]
Notice that $\pi_T(n) \ge n$.  This is witnessed by the single formula
\[
 \varphi(x_0, ..., x_{n-1}; y) = \bigvee_{i < n} x_i = y.
\]

As usual, if $\Phi = \{ \varphi \}$ is a singleton, then $S_\varphi(B) = S_\Phi(B)$ and $\VCd(\varphi) = \VCd(\Phi)$.  By coding tricks, it suffices to assume that the $\Phi$ in the definition of $\pi_T(n)$ are all singletons.

\begin{lem}[Sauer-Shelah Lemma]\label{Lem_Sauer}
 The following are equivalent for a formula $\varphi(x;y)$.
 \begin{enumerate}
  \item $\varphi$ has NIP,
	\item $\VCd(\varphi)$ is finite.
 \end{enumerate}
\end{lem}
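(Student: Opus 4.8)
The plan is to recognize the statement as the classical Sauer--Shelah lemma transported to the setting of $\varphi$-type spaces, and to reduce it to the combinatorial shatter-function bound for set systems of finite VC-dimension. The first step is a bookkeeping observation: for any $B \subseteq \Mon_y$ there is a bijection between $S_\varphi(B)$ and the family of traces $\{\, S_a \cap B : a \in \Mon_x \,\}$, where $S_a := \{\, b \in \Mon_y : \Mon \models \varphi(a;b) \,\}$. Indeed, each $p \in S_\varphi(B)$ is coded by the function $s \colon B \to 2$ with $p(x) = \{\varphi(x;b)^{s(b)} : b \in B\}$, and consistency (equivalently, realizability in $\Mon$) of this set says precisely that $s^{-1}(1) = S_a \cap B$ for some $a \in \Mon_x$; maximality is automatic since $s$ is defined on all of $B$. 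Thus $|S_\varphi(B)|$ is exactly the value at $B$ of the shatter function of the dual set system $\mathcal{S} := \{\, S_a : a \in \Mon_x \,\}$ on $\Mon_y$, and $\VCd(\varphi)$ is governed by the growth of that shatter function. I also record the standard fact that $\varphi$ has NIP if and only if $\mathcal{S}$ has finite VC-dimension: one formulation of NIP for a formula says directly that there is no infinite $\mathcal{S}$-shattered sequence, and a compactness argument then bounds the size of finite shattered sets; the dual formulation (in terms of $\{\varphi(\Mon;b) : b \in \Mon_y\}$ shattering finite subsets of $\Mon_x$) reduces to the same thing via the classical bound comparing the VC-dimension of a set system to that of its dual.

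For $(1) \Rightarrow (2)$, assume $\mathcal{S}$ has VC-dimension $d < \omega$. The combinatorial Sauer--Shelah lemma then bounds the number of traces of $\mathcal{S}$ on a finite set $B$ by $\sum_{i=0}^{d} \binom{|B|}{i}$. I would include a self-contained proof of this bound via Pajor's argument: by induction on $|B|$, the number of distinct traces of an arbitrary set system on $B$ is at most the number of subsets of $B$ that it shatters, so a system of VC-dimension $\le d$ has at most $\sum_{i \le d} \binom{|B|}{i}$ traces. Since $\sum_{i=0}^{d} \binom{n}{i} \le (d+1)\, n^{d}$ for $n \ge 1$, and $|S_\varphi(\emptyset)| = 1$, we get $|S_\varphi(B)| \le (d+1)\, |B|^{d}$ for all nonempty finite $B \subseteq \Mon_y$, hence $\VCd(\varphi) \le d < \infty$.

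For $(2) \Rightarrow (1)$ I argue contrapositively. If $\varphi$ has IP then, by definition together with the saturation of $\Mon$, for every $n < \omega$ there is a finite $B \subseteq \Mon_y$ with $|B| = n$ that is shattered by $\mathcal{S}$; for such $B$ every $s \colon B \to 2$ is realized, so $|S_\varphi(B)| = 2^{n}$ by the bijection above. Since $2^{n}$ eventually exceeds $K n^{\ell}$ for any fixed $K < \omega$ and real $\ell \ge 0$, no constant can witness a bound as in Definition~\ref{Defn_VCcoDen}, and therefore $\VCd(\varphi) = \infty$.

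The model-theoretic content is light: it is essentially the dictionary between $\varphi$-types over $B$ and traces of $\mathcal{S}$ on $B$, together with the standard compactness-and-duality packaging of NIP for a single formula. The one place where I expect to do genuine work is the combinatorial Sauer--Shelah bound itself --- if a self-contained treatment is wanted rather than a citation to the usual references (e.g.\ \cite{l92}) --- while the two implications above are then immediate.
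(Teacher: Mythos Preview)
Your argument is correct and is the standard route: identify $|S_\varphi(B)|$ with the shatter function of the dual family $\{S_a : a \in \Mon_x\}$, then invoke the combinatorial Sauer--Shelah bound for $(1)\Rightarrow(2)$ and the exponential growth on shattered sets for $(2)\Rightarrow(1)$. The paper, however, does not supply a proof of this lemma at all; it simply records the statement under the name ``Sauer--Shelah Lemma'' and moves on, treating it as background (cf.\ the reference to \cite{l92} in the introduction). So there is nothing to compare against beyond noting that your write-up fills in exactly the classical argument the paper is alluding to.
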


Even if a theory $T$ has NIP, this does not guarantee that $\pi_T(n)$ is finite.  Indeed we may have formulas $\varphi(x;y)$ with $|x|=1$ and have $\VCd(\varphi)$ arbitrarily large, even in the stable context.  For example, countably many independent equivalence relations.

On the other hand, many interesting theories $T$ have at some bound on $\pi_T(n)$.  For example, any weakly o-minimal theory $T$ has $\pi_T(n) = n$ for all $n < \omega$ (Theorem 6.1 of \cite{adhms}).  The theory of the $p$-adics $T$ has $\pi_T(n) \le 2n-1$ (Theorem 1.2 of \cite{adhms}).  The theory of algebraically closed fields $T$ has $\pi_T(n) \le 2n$ (Corollary 6.3 of \cite{adhms}).

The primary problem in the study of VC-codensity for theories is to determine when we can bound $\pi_T(n)$.  Specifically, what conditions on $T$ guarantee that $\pi_T(n) = n$?

\subsection{VC-minimaility}

For a set $X$ and a set system on $X$, $\mathcal{C} \subseteq \Pow(X)$, then we say that $\mathcal{C}$ is \emph{directed} if, for all $A, B \in \mathcal{C}$, one of the following holds:
\begin{itemize}
 \item $A \subseteq B$,
 \item $B \subseteq A$, or
 \item $A \cap B = \emptyset$.
\end{itemize}
Note that, if $\mathcal{C}$ is directed, then $(\mathcal{C}; \supseteq)$ is a forest (and, if $X \in \mathcal{C}$, then it is a tree with root $X$).

In general, we can convert from formulas to set systems.  If $\theta(x)$ is a formula (possibly with parameters), then let $\theta(\Mon) := \{ a \in \Mon_x : \models \theta(a) \}$.  Suppose $\Delta = \{ \delta_i(x; y_i) : i \in I \}$ is a set of partitioned formulas (where $y_i$ is allowed to vary but $x$ is fixed and usually $|x| = 1$) and consider the set system on $\Mon_x$,
\[
 \mathcal{C}_\Delta := \{ \delta_i(\Mon; b) : i \in I, b \in \Mon_{y_i} \}
\]

\begin{defn}[Directedness]\label{Defn_Directedness}
 We say $\Delta$ is \emph{directed} if $\mathcal{C}_\Delta$ is directed.  A formula $\delta(x;y)$ is \emph{directed} if $\{ \delta(x;y) \}$ is directed.  An \emph{instance} of $\Delta$ is a formula of the form $\delta_i(x; b)$ for some $i \in I$ and $b \in \Mon_{y_i}$.
\end{defn}

\begin{defn}[VC-minimality, \cite{adl}]\label{Defn_VCMin}
 A theory $T$ is \emph{VC-minimal} if there exists a directed set of formulas $\Delta$ with $|x| = 1$ such that all formulas (with parameters) $\theta(x)$ are $T$-equivalent to a boolean combination of instances of $\Delta$.
\end{defn}

We will call the family $\Delta$ the \emph{generating family} and we will call instances of $\Delta$ \emph{balls}.  Throughout the remainder of this paper, we assume $T$ is a VC-minimal theory.

\begin{defn}[Unpackable, \cite{cs}]\label{Defn_Unpackable}
 A directed family $\Delta$ is \emph{unpackable} if no instance of $\Delta$ is $T$-equivalent to a disjunction of finitely many proper instances.
\end{defn}

The following is a fundamental decomposition theorem for formulas in VC-minimal theories.

\begin{thm}[Theorem 4.1 of \cite{fg}]\label{Thm_uBalls}
 For all formulas $\varphi(x; y)$ (with $|x| = 1$), there exists a directed formula $\delta(x; z)$, $N < \omega$, and formulas $\psi_i(x; y)$ for $i < 2N$ such that
 \begin{itemize}
  \item for all $b \in \Mon_y$, there exists $n \le N$ and $c_i, ..., c_{n-1} \in \Mon_z$, $\psi_i(x; b)$ is $T$-equivalent to $\bigvee_{i < n} \delta(x; c_i)$, and
	\item for all $b \in \Mon_y$, $\varphi(x; b)$ is $T$-equivalent to
	 \[
	  \bigwedge_{i < N} \psi_{2i}(x; b) \wedge \neg \psi_{2i+1}(x; b).
	 \]
 \end{itemize}
\end{thm}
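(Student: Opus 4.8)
The plan is to reduce the statement in three stages --- from "Boolean combination of balls'' to "a bounded number of balls'' to "a single directed formula'' --- and then to extract the Swiss--cheese normal form from disjunctive normal form, directedness doing the real work at that last step; the one delicate point is that the pieces $\psi_i$ must be genuine formulas, chosen uniformly in $b$. (I read the displayed conclusion as a \emph{disjunction} $\bigvee_{i<N}\big(\psi_{2i}(x;b)\wedge\neg\psi_{2i+1}(x;b)\big)$, i.e. a union of $N$ generalized Swiss cheeses.)

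First, fix a directed generating family for $T$ and enlarge it by $x=x$, which preserves directedness, so that $\Mon_x$ is a ball and complements of balls become Swiss cheeses $\Mon_x\setminus(\text{ball})$. By VC-minimality every $\varphi(\Mon;b)$ is a Boolean combination of balls, and I would first upgrade this to \emph{uniform finiteness}: there are one finite subfamily $\delta_{i_1}(x;y_{i_1}),\dots,\delta_{i_m}(x;y_{i_m})$ of the generating family and one $K<\omega$ so that every $\varphi(\Mon;b)$ is a Boolean combination of $\le K$ instances of these. Indeed, for fixed $K$ and a fixed finite tuple $\bar\delta$ of generators, ``$\varphi(x;b)$ is $T$-equivalent to a Boolean combination of $\le K$ instances of $\bar\delta$'' is definable in $b$ --- a finite disjunction over Boolean patterns $\theta$ of formulas $\exists\bar z\,\forall x\,(\varphi(x;b)\leftrightarrow\theta(x;\bar z))$ --- and these definable sets are directed under $\subseteq$ (concatenate tuples, add bounds) with union $\Mon_y$, so by saturation one of them already equals $\Mon_y$. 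Finally, a finite set of directed formulas amalgamates into one directed $\delta(x;z)$ whose instances are exactly $\{\emptyset\}$ together with all instances of the $\delta_{i_j}$ --- a subfamily of a directed family, hence directed --- by adjoining dummy coordinates to $z$ that ``select'' the index $j$. So we may assume: every $\varphi(\Mon;b)$ is a Boolean combination of $\le K$ instances of one directed formula $\delta(x;z)$.

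Now fix $b$ and put $\varphi(x;b)$ in disjunctive normal form in the balls $\delta(x;c_1),\dots,\delta(x;c_K)$ occurring in it: a disjunction of $\le 2^K$ conjunctions of balls and negated balls. A conjunction of balls is, by directedness, either empty (two of them disjoint) or the $\subseteq$-least among them, hence a single ball --- or $\Mon_x$ if there is no positive conjunct, and $\Mon_x$ is now a ball. Dropping the empty disjuncts, $\varphi(\Mon;b)$ is a union of $\le 2^K$ sets $E\setminus F$ with $E$ a ball and $F$ a union of $\le K$ balls; with $N:=\max(2^K,K)$ and padding by trivial disjuncts $\emptyset\setminus\emptyset$, this is exactly $\bigvee_{i<N}\big(\psi_{2i}(x;b)\wedge\neg\psi_{2i+1}(x;b)\big)$ with each $\psi_i(x;b)$ a union of $\le N$ balls --- \emph{provided} the $\psi_i$ are named by formulas.

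That proviso is the only genuine obstacle, since the DNF computation is pointwise in $b$. Its combinatorial shape --- which balls are positive, how many disjuncts survive --- takes finitely many values, and that part is harmless: I would partition $\Mon_y$ into finitely many definable pieces $B_s$ of constant shape, solve on each with formulas $\psi^{(s)}_i(x;y)$, and glue by replacing $\psi^{(s)}_i$ with $\psi^{(s)}_i(x;y)\wedge\beta_s(y)$, where $\beta_s$ defines $B_s$, and concatenating the slots over $s$; for $b\in B_s$ the $s$-slots then reproduce the decomposition and every other slot collapses to $\emptyset\setminus\emptyset$. What remains --- and here lies the substance of the argument --- is to choose the Swiss-cheese pieces \emph{definably in $b$} within a single shape. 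For this I would work with an unpackable generating family and pass to the \emph{canonical} Swiss-cheese decomposition of a Boolean combination of directed balls --- the pieces cut out by the maximal balls contained in $\varphi(\Mon;b)$ and, inside each, the maximal balls of its complement --- and show that these canonical pieces are uniformly definable, using that instances of $\delta$ carry canonical parameters (equivalently, passing through the canonical forest associated with $\mathcal{C}_\delta$). Establishing this canonical decomposition and verifying its uniform definability is the hard part; everything else is bookkeeping.
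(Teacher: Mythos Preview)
The paper does not give its own proof of this statement: Theorem~\ref{Thm_uBalls} is quoted as Theorem~4.1 of \cite{fg} and used as a black box, so there is nothing in the present paper to compare your argument against line by line.

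That said, your sketch is headed in the right direction and matches what the title of \cite{fg} (``Canonical forests in directed families'') suggests is the actual content there. The compactness reduction to a single directed $\delta$ and a uniform bound $K$ is standard and correct; the DNF step and the observation that a conjunction of balls collapses, by directedness, to a single ball or $\emptyset$ are exactly right; and you have correctly isolated the one nontrivial point, namely that the Swiss-cheese pieces must be uniformly definable in $b$. Your plan to obtain this via a \emph{canonical} decomposition (maximal balls inside $\varphi(\Mon;b)$, then maximal balls of the complement inside each) together with canonical parameters for $\delta$-instances is precisely the mechanism one expects the cited paper to supply. Your parenthetical reading of the displayed conclusion as $\bigvee_{i<N}(\psi_{2i}\wedge\neg\psi_{2i+1})$ rather than $\bigwedge$ is also the natural Swiss-cheese normal form and is consistent with how the result is used in Remark~\ref{Rem_FinVC}; the $\bigwedge$ in the statement appears to be a typo.

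The only place where your outline is thin is exactly where you say it is: verifying that the canonical Swiss-cheese decomposition exists and is uniformly definable requires some care (one typically passes to an unpackable directed family and to $T^{\eq}$ for canonical parameters), and this is the substance of \cite{fg}. But as a proof \emph{idea} your proposal is sound.
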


\begin{rem}[Finite VC-minimality and u-balls]\label{Rem_FinVC}
 Throughout this paper, we will only be working with local properties of a VC-minimal theory (i.e., computing VC-codensity).  In light of Theorem \ref{Thm_uBalls}, we may assume that the generating family $\Delta$ is a singleton, $\{ \delta \}$.  Moreover, we may assume that the formula $\varphi(x; y)$ considered is such that, for all $b \in \Mon_y$, there exists $N < \omega$ such that $\varphi(x; b)$ is $T$-equivalent to a disjunction of at most $N$ instances $\delta$.  In \cite{cs}, these are called u-balls.  
\end{rem}

\begin{rem}[VC-minimality when $\acl = \dcl$]\label{Rem_acldcl}
 If $\acl^\eq = \dcl^\eq$ in $T$, then we may actually assume that all formulas are balls.  For example, suppose $\varphi(x; y)$ is a formula and $N < \omega$ are such that, for all $b \in \Mon_y$, there exists $n \le N$, $c_0, ..., c_{n-1} \in \Mon_z$ such that,
 \[
  \models (\forall x) \left( \varphi(x; b) \leftrightarrow \bigvee_{i < n} \delta(x; c_i) \right).
 \]
 Then, in particular, $c_i / \delta \in \acl^\eq(b)$, hence $c_i / \delta \in \dcl^\eq(b)$.  Thus, there exist formulas $\delta_i(x; y)$ for $i < N$ such that
 \begin{itemize}
  \item $\{ \delta_i(x; y) : i < N \}$ is directed, and
	\item $\varphi(x; y)$ is $T$-equivalent to $\bigvee_{i < n} \delta_i(x; y)$.
 \end{itemize}
 For more details, see Section \ref{Sect_FullyVCMin} below.
\end{rem}

Now for any set $C \subseteq \Mon_z$ and any directed set $\Delta(x;z)$, there is a quasi-forest structure on $C \times \Delta$.  Namely,
\[
 \langle c_0, \delta_0 \rangle \unlhd \langle c_1, \delta_1 \rangle \text{ if } \models (\forall x)(\delta_1(x; c_1) \rightarrow \delta_0(x; c_0)).
\]
This is a quasi-forest instead of a true forest because we could have that $\delta_0(x; c_0)$ and $\delta_1(x; c_1)$ are unequal but $T$-equivalent.  Let $\Fr(C, \Delta) := (C \times \Delta; \unlhd)$ denote this quasi-forest.  We can expand $C \times \Delta$ by a ``root,'' call it $0$, and set $0 \unlhd \langle c, \delta \rangle$ for all $c \in C$ (and $\langle c, \delta \rangle \unlhd 0$ if $\models (\forall x)\delta(x; c)$).  Then, $\Td(C,\Delta) := (C \times \Delta \cup \{ 0 \}; \unlhd)$ is a quasi-tree.

\begin{rem}\label{Rem_Types}
 Suppose $C$ and $\Delta$ are finite.  Each type in the $\Delta$-type space $S_\Delta(C)$ corresponds to a node in the quasi-tree $\Td(C,\Delta)$ (and, if $\Delta$ is an unpackable, this is a bijective correspondence).  To see this, for each $c \in C$ and $\delta \in \Delta$, consider the generic $\Delta$-type corresponding to the (interior) of the ball $\delta(x; c)$, namely
 \[
  \nu_{c,\delta}(x) = \left\{ \delta'(x; c')^{\text{iff } \models (\forall x)(\delta(x; c) \rightarrow \delta'(x; c'))} : c' \in C, \delta' \in \Delta \right\}
 \]
 and let $\nu_0(x) = \{ \neg \delta'(x; c') : c' \in C, \delta' \in \Delta \}$, the generic type of the root (needed if no ball is the whole space yet balls do not cover the whole space).  Define the \emph{virtual type space}
 \[
  \Vir_\Delta(C) = \{ \nu_{c,\delta}(x) : c \in C, \delta \in \Delta \} \cup \{ \nu_0 \}.
 \]
 By the directedness of $\Delta$, it is not hard to see that $S_\Delta(C) \subseteq \Vir_\Delta(C)$.  Note, however, that if $\Delta$ is packable, then this may be a proper inclusion.  If a ball is the union of proper subballs, then the generic type corresponding to this ball is inconsistent.  This is why we refer to these as \emph{virtual} types.

 It is necessary to consider only finite $C$ and $\Delta$.  For example, in the theory of dense linear orders, if $C = \mathbb{Q}$ and $\Delta = \{ x < y \}$, then $S_\Delta(C)$ has size $2^{\aleph_0}$.  On the other hand, as defined, clearly $\Vir_\Delta(C)$ is countable.  Indeed, $\Vir_\Delta(C)$ misses all non-proper cuts.
\end{rem}

In particular we get that, for finite $C$ and finite $\Delta$,
\[
 |S_\Delta(C)| \le |\Delta| \cdot |C| + 1.
\]
Thus, for a VC-minimal theory $T$,
\[
 \pi_T(1) = 1.
\]

This leads to the primary open question regarding VC-minimal theories (and VC-codensity), a restatement of Open Question \ref{Ques_vcminVCden} in this terminology.

\begin{ques}[VC-codensity in VC-minimal theories]\label{Ques_VCMin}
 Is it true that, in all VC-minimal theories $T$, for all $n < \omega$, $\pi_T(n) = n$?
\end{ques}

For example, the theory of algebraically closed valued fields (ACVF) is VC-minimal.  Therefore, answering this question would improve the bound given in \cite{adhms}.  In this paper, we give several partial results to this question.  In particular, we get a new result for ACVF.

In the next subsection, in light of Remark \ref{Rem_Types}, we will be working with quasi-forests $\Fr$, keeping in mind that these quasi-forests will correspond to $\Delta$-types spaces, hence aid us in computing the VC-codensity of formulas in $T$.

\subsection{Quasi-forests}

Let $(F; \unlhd)$ be a finite quasi-forest.  For each $t \in F$, define $\nu(t) = \{ s \in F : s \unlhd t \}$.  In the model theory context, if we think of $t$ as the parameter, then $\nu(t)$ is the generic type corresponding to $t$.  Then we can create the ``tree of types,'' namely
\[
 \Vir(F, \unlhd) = \{ \nu(t) : t \in F \} \cup \{ \emptyset \}
\]
ordered via inclusion (i.e., for $p, q \in \Vir(F, \unlhd)$, $p \unlhd q$ if $p \subseteq q$).  Then, it is easy to see that $\Vir(F, \unlhd) \setminus \{ \emptyset \}$ is isomorphic to the partial order generated by $(F, \unlhd)$ via the map $\nu$.  Moreover, for $p, q \in \Vir(F, \unlhd)$, $(p \cap q) \in \Vir(F, \unlhd)$ is the tree-theoretic meet of $p$ and $q$.

From an arbitrary linear ordering $\le^*$ on each level of $\Vir(F, \unlhd)$, we construct a linear ordering on $\Vir(F, \unlhd)$, $\le$ extending the tree order as follows:
\begin{itemize}
 \item If $p \subset q$, then $p < q$.
 \item If $p$ and $q$ are $\subseteq$-incomparable, let $p^*$ be such that $(p \cap q) \subset p^* \subseteq p$ and $p^*$ minimal such and similarly define $q^*$.  Then $p < q$ if $p^* <^* q^*$.
\end{itemize}
Note that in the case where $(F; \unlhd) = \Fr(C, \Delta)$ for a directed set of formulas $\Delta(x;z)$ and $C \subseteq \Mon_z$ as above, the ordering $\le$ we get here corresponds to the ``convex ordering'' (see \cites{fg2, gl}).  That is, the instances of $\delta$ are convex in this ordering.  Formally:

\begin{lem}\label{Lem_ConvexOrdering}
 For all $t \in F$, the set $\chi(t) := \{ p \in \Vir(F, \unlhd) : t \in p \}$ is $\le$-convex.
\end{lem}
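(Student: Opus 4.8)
The plan is to recast $\chi(t)$ in purely order-theoretic terms inside $\Vir(F,\unlhd)$, and then to show that every node outside $\chi(t)$ lies $\le$-below \emph{all} of $\chi(t)$ or $\le$-above \emph{all} of $\chi(t)$, which makes $\chi(t)$ convex at once. First I would note that $s \in \nu(t)$ iff $s \unlhd t$, so by reflexivity and transitivity of $\unlhd$ we get $\nu(t) \subseteq \nu(s)$ iff $t \unlhd s$ iff $t \in \nu(s)$, while $t \notin \emptyset$. Hence, writing $p_0 := \nu(t)$,
\[
 \chi(t) = \{\, p \in \Vir(F,\unlhd) : p_0 \subseteq p \,\},
\]
i.e.\ $\chi(t)$ is exactly the ``subtree above $p_0$.'' So it is enough to show that for every $p_0 \in \Vir(F,\unlhd)$ the up-set $\{p : p_0 \subseteq p\}$ is $\le$-convex. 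Throughout I use the structure recorded just before the lemma: $\{x : x \subseteq p\}$ is a $\subseteq$-chain for each $p$, and $p \cap q$ is the tree-theoretic meet of $p$ and $q$.

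Now fix $q \in \Vir(F,\unlhd)$ with $p_0 \not\subseteq q$; the goal is to see that $q$ sits entirely on one side of $\chi(t)$. If $q \subseteq p_0$, then $q \subsetneq p_0 \subseteq p$ for every $p \in \chi(t)$, so $q < p$ for all such $p$ by the first clause defining $\le$. Otherwise $q$ and $p_0$ are $\subseteq$-incomparable; put $m := q \cap p_0$, so $m \subsetneq p_0$ and $m \subsetneq q$. The key point is that $p \cap q = m$ (and $p,q$ are $\subseteq$-incomparable) for \emph{every} $p \in \chi(t)$: since $p_0 \subseteq p$ and $p \cap q \subseteq p$ both lie in the chain $\{x : x \subseteq p\}$ they are comparable, and $p_0 \subseteq p \cap q$ would force $p_0 \subseteq q$, so $p \cap q \subsetneq p_0$, whence $p \cap q \subseteq p_0 \cap q = m \subseteq p \cap q$.

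With the branch point $p \cap q = m$ fixed, apply the second clause defining $\le$. The immediate successor $q^*$ of $m$ toward $q$, namely $\min\{x : m \subsetneq x \subseteq q\}$, depends only on $m$ and $q$, hence only on $p_0$ and $q$. The immediate successor $p^*$ of $m$ toward $p$, namely $\min\{x : m \subsetneq x \subseteq p\}$, satisfies $p^* \subseteq p_0$ (since $p_0$ belongs to that chain and $p_0 \subseteq p$, minimality of $p^*$ forces $p^* \subseteq p_0$), so in fact $p^* = \min\{x : m \subsetneq x \subseteq p_0\}$ --- again depending only on $p_0$ and $q$. Both $p^*$ and $q^*$ cover $m$ (by minimality nothing lies strictly between), so they lie on a common level and $\le^*$ compares them; hence the truth value of $p^* <^* q^*$ --- equivalently of $p < q$ --- is the same for all $p \in \chi(t)$. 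Combining the two cases, every $q \in \Vir(F,\unlhd) \setminus \chi(t)$ is $\le$-below all of $\chi(t)$ or $\le$-above all of $\chi(t)$; therefore $\chi(t)$ is $\le$-convex.

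The step where the actual work is concentrated is the middle one: the ``tree surgery'' showing that $p \cap q$ collapses to the fixed meet $p_0 \cap q$ for every $p \in \chi(t)$, together with the bookkeeping in the last paragraph that the covers $p^*,q^*$ of $m$ are determined by $p_0$ and $q$ alone and genuinely lie on a common level (so that $\le^*$ applies). Once those are in hand, the conclusion is just an unwinding of the definition of $\le$ and of the meet-tree structure of $\Vir(F,\unlhd)$.
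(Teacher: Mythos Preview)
Your proof is correct, and it takes a genuinely different route from the paper's. The paper argues directly by contradiction: given $p < r < q$ with $p, q \in \chi(t)$, it observes $t \in p \cap q$ and then does a three-way case split on how $r$ relates to $p \cap q$ (containing it, contained in it, or incomparable to it), deriving a contradiction in the latter two cases from the definition of $\le$. Your approach instead first recasts $\chi(t)$ as the up-set $\{\, p : \nu(t) \subseteq p \,\}$ above $p_0 = \nu(t)$, and then shows that every node $q$ outside this up-set lies entirely on one $\le$-side of it, by proving that the meet $p \cap q$ collapses to the fixed value $p_0 \cap q$ for all $p \in \chi(t)$ and hence that the covering successors $p^*, q^*$ used in the definition of $\le$ are independent of the particular $p$. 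Your argument is longer but more structural: the identification of $\chi(t)$ with a subtree rooted at $\nu(t)$ is a clean observation the paper does not make explicit, and your ``one-sided'' formulation actually proves slightly more than bare convexity. The paper's version is shorter and more direct. Both ultimately rest on the same meet computation and the two-clause definition of $\le$.
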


\begin{proof}
 Suppose $t \in F$ and $p < r < q$ with $p, q \in \chi(t)$.  In particular, $t \in (p \cap q)$.  If $(p \cap q) \subseteq r$, then $t \in r$ so $r \in \chi(t)$.  So suppose this fails.  If $r \subset (p \cap q)$, then in particular $r \subset p$, hence $r < p$, a contradiction.  Thus $r$ and $(p \cap q)$ are $\subseteq$-incomparable.  Thus $(r \cap q) = (r \cap p) \subset (p \cap q)$.  Hence, by the second part of the definition of ordering, either $r < p,q$ or $p,q < r$ (depending on $\le^*$).  Contradiction.
\end{proof}

For each $p, q \in \Vir(F, \unlhd)$, define
\[
 \diff(p,q) = (p \triangle q) \text{ and } \dist(p,q) = | \diff(p,q) |.
\]

\begin{lem}\label{Lem_SumDist}
 For any sequence $p_0 < ... < p_m$ from $\Vir(F, \unlhd)$,
 \[
  \sum_{i < m} \dist(p_i, p_{i+1}) \le 2|F|.
 \]
\end{lem}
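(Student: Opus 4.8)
The plan is to bound the left-hand side by counting, for each element $t \in F$, how many times $t$ contributes to the sum, and then to show that this contribution is at most $2$.

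First I would exchange the order of summation. Since $\dist(p_i, p_{i+1}) = |\diff(p_i, p_{i+1})| = |\{ t \in F : t \in p_i \triangle p_{i+1} \}|$, we get
\[
 \sum_{i < m} \dist(p_i, p_{i+1}) = \sum_{t \in F} n_t, \qquad n_t := |\{ i < m : t \in p_i \triangle p_{i+1} \}|,
\]
so that $n_t$ counts the number of indices $i < m$ at which the truth value of ``$t \in p_i$'' differs between $i$ and $i+1$, as $i$ runs from $0$ to $m$.

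The main step is to show $n_t \le 2$ for each fixed $t \in F$. For this I would invoke Lemma \ref{Lem_ConvexOrdering}, which says $\chi(t) = \{ p \in \Vir(F, \unlhd) : t \in p \}$ is $\le$-convex. Since $p_0 < p_1 < \dots < p_m$ is strictly $\le$-increasing, convexity of $\chi(t)$ forces the index set $\{ i \le m : t \in p_i \} = \{ i \le m : p_i \in \chi(t) \}$ to be a block of consecutive integers inside $\{ 0, 1, \dots, m \}$ (possibly empty). If it is empty, $n_t = 0$; otherwise write it as $\{ a, a+1, \dots, b \}$ and observe that a change in ``$t \in p_i$'' between consecutive indices can occur only between $p_{a-1}$ and $p_a$ (when $a \ge 1$) and between $p_b$ and $p_{b+1}$ (when $b \le m-1$), so $n_t \le 2$. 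Summing over $t \in F$ then gives $\sum_{i < m} \dist(p_i, p_{i+1}) = \sum_{t \in F} n_t \le 2|F|$.

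I do not expect a serious obstacle. The one point that needs a little care is the passage from ``$\chi(t)$ is $\le$-convex'' to ``$\{ i \le m : t \in p_i \}$ is a block of consecutive integers,'' which uses that the $p_i$ are listed in strictly increasing $\le$-order, so betweenness of indices matches betweenness of the corresponding types in $\le$; a $p_i$ equal to $\emptyset$ causes no difficulty since $\emptyset \in \chi(t)$ for no $t$.
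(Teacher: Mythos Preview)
Your proposal is correct and follows essentially the same approach as the paper's proof: both exchange the order of summation, invoke Lemma~\ref{Lem_ConvexOrdering} to conclude that each $t \in F$ can lie in $\diff(p_i,p_{i+1})$ for at most two indices $i$, and then sum over $t$. Your write-up is slightly more explicit about the block-of-consecutive-indices reasoning, but the argument is the same.
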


\begin{proof}
 For any $t \in F$, for all $i < m$, $t \in \diff(p_i, p_{i+1}) = (p_i \triangle p_{i+1})$ if and only if
 \begin{itemize}
  \item $p_i \in \chi(t)$ and $p_{i+1} \notin \chi(t)$, or
	\item $p_i \notin \chi(t)$ and $p_{i+1} \in \chi(t)$.
 \end{itemize}
 By Lemma \ref{Lem_ConvexOrdering}, $\chi(t)$ is $\le$-convex, so, for each $t \in F$, there exists at most two $i < m$ such that $t \in \diff(p_i, p_{i+1})$.  The conclusion follows.
\end{proof}

\subsection{The quasi-forest $\Fr(C, \Delta)$}

Fix $\Delta(x;z)$ a finite directed set, $C \subseteq \Mon_z$ finite, and consider $\Fr(C, \Delta)$ as defined above.  Notice that $\Vir_\Delta(C)$ is isomorphic to $\Vir(\Fr(C, \Delta))$ via $\nu_0 \mapsto \emptyset$ and $\nu_{c,\delta}(x) \mapsto \nu(\langle c, \delta \rangle)$.  Thus, for $p, q \in \Vir_\Delta(C)$, we define
\[
 \diff(p,q) = (p \triangle q) \text{ and } \dist(p,q) = | \diff(p,q) |.
\]
Clearly this corresponds via our isomorphism to the definition above.  By Lemma \ref{Lem_SumDist}, we get the following.

\begin{lem}\label{Lem_SumDist2}
 There exists $\le$ a linear order on $\Vir_\Delta(C)$ such that, for all $p_0 < ... < p_m$ from $\Vir_\Delta(C)$,
 \[
  \sum_{i < m} \dist(p_i, p_{i+1}) \le 2 |C| |\Delta|.
 \]
\end{lem}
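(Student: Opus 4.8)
The plan is to deduce this immediately from Lemma \ref{Lem_SumDist} applied to the quasi-forest $F := \Fr(C,\Delta)$. First, note that the underlying set of $F$ is $C \times \Delta$, so $|F| = |C| \cdot |\Delta|$. Fix any linear ordering $\le^*$ on each level of $\Vir(F,\unlhd)$ (for concreteness, one may well-order $C$ and $\Delta$ and order each level lexicographically), and let $\le$ be the resulting linear order on $\Vir(F,\unlhd)$ produced by the construction preceding Lemma \ref{Lem_ConvexOrdering}.

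Next, transport $\le$ along the isomorphism $\Vir_\Delta(C) \cong \Vir(\Fr(C,\Delta))$ recorded above (sending $\nu_0 \mapsto \emptyset$ and $\nu_{c,\delta} \mapsto \nu(\langle c,\delta \rangle)$) to obtain a linear order, still written $\le$, on $\Vir_\Delta(C)$. The only thing to verify is that this isomorphism intertwines the two identically denoted $\dist$ functions; but $\dist$ on $\Vir_\Delta(C)$ was defined precisely as the pullback of $\dist$ on $\Vir(F,\unlhd)$, so there is nothing to check. Consequently, any chain $p_0 < \dots < p_m$ in $\Vir_\Delta(C)$ maps to a chain of the same length in $\Vir(F,\unlhd)$ with equal consecutive distances, and Lemma \ref{Lem_SumDist} then yields $\sum_{i<m} \dist(p_i,p_{i+1}) \le 2|F| = 2|C|\,|\Delta|$.

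I do not anticipate any genuine obstacle: the statement is a bookkeeping corollary of Lemma \ref{Lem_SumDist}, and the only points needing (minimal) care are keeping the two overloaded uses of $\dist$ straight and observing that the isomorphism preserves levels, so that ``a linear order on each level'' transports without issue. In fact one could equally well simply invoke Lemma \ref{Lem_SumDist} as a black box for $F = \Fr(C,\Delta)$, since that lemma already packages the existence of a suitable $\le$.
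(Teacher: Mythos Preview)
Your proposal is correct and matches the paper's own treatment: the paper simply states this lemma as an immediate consequence of Lemma~\ref{Lem_SumDist} applied to $F = \Fr(C,\Delta)$, using the isomorphism $\Vir_\Delta(C) \cong \Vir(\Fr(C,\Delta))$ and the observation that $|F| = |C|\cdot|\Delta|$. There is nothing to add.
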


This lemma is vital to our method of counting types in VC-minimal theories, as we will demonstrate in the next section using the test case of fully VC-minimal theories.

\section{Test Case: Fully VC-Minimal Theories}\label{Sect_FullyVCMin}

\begin{defn}[Definition 3.9 of \cite{gl}]\label{Defn_FullyVCMin}
 A theory $T$ is \emph{fully VC-minimal} if there exists a directed family of formulas $\Delta$ with $|x| = 1$ such that, for all formulas $\varphi(x;y)$ with $|x| = 1$ and $y$ arbitrary, $\varphi(x;y)$ is $T$-equivalent to a boolean combination of elements of $\Delta$.
\end{defn}

As noted in Remark \ref{Rem_acldcl} above, if $T$ is VC-minimal and $\acl^\eq = \dcl^\eq$, then $T$ is fully VC-minimal.  For example, any weakly o-minimal theory is fully VC-minimal.  On the other hand, ACVF and even ACF are not fully VC-minimal.  See Example 3.15 of \cite{gl} for details.

\begin{thm}[Theorem 3.14 of \cite{gl}]\label{Thm_FullyVCMinLowDen}
 If $T$ is fully VC-minimal, then $\pi_T(n) = n$ for all $n < \omega$.  That is, for all formulas $\varphi(x;y)$, the VC-codensity of $\varphi$ is $\le |x|$.
\end{thm}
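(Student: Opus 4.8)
The plan is to prove the statement by induction on $|y|$, using the machinery of the quasi-forest $\Fr(C,\Delta)$ and, crucially, Lemma \ref{Lem_SumDist2}. By Remark \ref{Rem_acldcl} and the discussion opening this section, since $T$ is fully VC-minimal we may assume every formula $\varphi(x;y)$ with $|x|=1$ is $T$-equivalent to a boolean combination of formulas $\delta_i(x;y)$ where $\{\delta_i(x;y):i<k\}$ is directed; by the usual coding trick we may in fact reduce to the case where $\Phi = \{\delta(x;y)\}$ is a single directed formula (the general boolean combination costs only a constant factor in $K$, not an increase in the exponent). So the real task is: for a directed formula $\delta(x;y)$ with $|y|=n$, show $|S_\delta(B)| \le K\cdot|B|^n$ for finite $B\subseteq\Mon_y$.

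First I would set up the inductive step. Write $y = y'\concat y_{n-1}$ with $|y'| = n-1$. Given finite $B\subseteq\Mon_y$, project onto the first $n-1$ coordinates to get a finite $B'\subseteq\Mon_{y'}$ with $|B'|\le|B|$, and for each $b'\in B'$ let $B_{b'} = \{a : b'\concat a\in B\}$ be the fiber, so $\sum_{b'\in B'}|B_{b'}| = |B|$. The idea is that a $\delta$-type $p\in S_\delta(B)$ is determined by, first, which "column" of the quasi-forest it lives in — controlled by the $n-1$ free variables $y'$ — and then, within that column, finitely much extra data controlled by the last variable. The directedness of $\delta$ is what lets us organize the columns: for fixed realization $a$ of $y_{n-1}$, the formula $\delta(x;y'a)$ is a directed formula in $y'$, so by induction the number of types it cuts out over $B'$ is polynomial of degree $n-1$ in $|B'|$.

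The main work — and the main obstacle — is to control how types can change as we vary the last coordinate, and this is exactly where Lemma \ref{Lem_SumDist2} enters. Fix $b'\in B'$; the balls $\{\delta(x;b'\concat a) : a\in B_{b'}\}$ together with the rest of $\Delta$-structure form a quasi-forest on which Lemma \ref{Lem_SumDist2} gives a linear order $\le$ with $\sum_i \dist(p_i,p_{i+1}) \le 2|B||\Delta|$ along any chain. The point is that as we walk along this order, the $\delta$-type over the fiber changes only "locally": each element $t$ of the forest flips membership at most twice (Lemma \ref{Lem_ConvexOrdering}), so the total number of distinct types realized over the whole of $B$, summed over all the convex pieces indexed by the degree-$(n-1)$ structure coming from $B'$, telescopes to something bounded by a constant times $|B'|^{n-1}\cdot|B|$, hence by a constant times $|B|^n$. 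I expect the delicate bookkeeping to be (i) verifying that the quasi-forest associated to the full parameter set $B$ genuinely decomposes, via the projection to $B'$, into pieces each of which is governed by the inductive bound, and (ii) making sure the constant $K$ stays finite and depends only on $\delta$ (and $n$), not on $B$ — this requires that the degree-$(n-1)$ bound from the inductive hypothesis be applied uniformly across all choices of the last coordinate, which is legitimate since there is a single formula $\delta$ in play throughout. The base case $n=1$ is precisely the bound $|S_\Delta(C)|\le|\Delta|\cdot|C|+1$ already established before Open Question \ref{Ques_VCMin}.
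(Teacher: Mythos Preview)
Your proposal misidentifies the induction variable. The theorem asserts $\VCd(\varphi) \le |x|$ --- the bound is in the number of \emph{object} variables --- and the paper accordingly inducts on $n = |x|$, not on $|y|$. Your setup fixes $|x| = 1$ (which is what directedness of $\delta$ requires) and then attempts to induct on $|y| = n$ by projecting $B \subseteq \Mon_y$ onto its first $n-1$ coordinates. But with $|x| = 1$ and $\delta$ directed, the bound $|S_\delta(B)| \le |B| + 1$ is already established outright --- this is exactly the inequality you cite as your ``base case'' --- so there is nothing to induct on, and the target bound $K|B|^{|y|}$ is vacuous. The fiber decomposition $B = \bigcup_{b'} \{b'\} \times B_{b'}$ is not relevant, because the exponent in the desired inequality is $|x|$, which is insensitive to the arity of $y$.

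The paper's argument is structurally different. For $|x| = n > 1$ one writes $x = (x_0, x_1)$ with $|x_0| = 1$ and repartitions $\hat\varphi(x_0; x_1, y) := \varphi(x_0, x_1; y)$, moving $x_1$ to the parameter side. Full VC-minimality then yields a finite directed $\Delta_0(x_0; x_1, y)$ whose boolean combinations include $\hat\varphi$. The quasi-forest $\Fr(a_1 \concat B, \Delta_0)$ depends on $a_1 \in \Mon_{x_1}$, and the key move is to encode this dependence by formulas $\psi_{\delta,\delta'}(x_1; y, y') := \forall x_0\,(\delta'(x_0; x_1, y') \rightarrow \delta(x_0; x_1, y))$: the $\Psi$-type of $a_1$ over $B \times B$ determines the quasi-forest up to isomorphism (Lemma~\ref{Lem_TreesDeter}) and hence the virtual type space $\Vir_{\Delta_0}(p, B)$. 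One then applies full VC-minimality again, now to $\Psi$ with $|x_1| = n-1$, obtaining a directed $\Delta_1(x_1; y, y')$, and Lemma~\ref{Lem_SumDist2} is applied to $S_{\Delta_1}(B \times B)$ --- not to the original $\delta$-balls --- to bound how many new virtual $\Delta_0$-types can appear as the $\Delta_1$-type varies. The telescoping you describe does occur, but what is being walked through is the linearly ordered space of $x_1$-types over $B \times B$, not the fibers of a projection of $B$.
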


The proof presented in \cite{gl} goes through UDTFS-rank, similar to the proof for weakly o-minimal theories given in \cite{adhms}, but in this section, we will sketch an alternate proof using ``pure combinatorics.''  We use this to motivate the process by which we compute the VC-codensity of some formulas in general VC-minimal theories.

We prove Theorem \ref{Thm_FullyVCMinLowDen} by induction on $n$.  If $n = 1$, fix $\varphi(x;y)$ with $|x| = 1$.  Fix a finite directed $\Delta(x;y)$ such that $\varphi(x;y)$ is a boolean combination of elements of $\Delta$.  Then, for any finite $B \subseteq \Mon_y$,
\[
 |S_\varphi(B)| \le |S_{\Delta}(B)|.
\]
However, as argued above, $|S_{\Delta}(B)| \le | \Delta | \cdot |B| + 1$, which is linear in $|B|$.  Hence, $\VCd(\varphi) \le 1$.

In general, fix $n > 1$ and consider $\varphi(x_0, x_1; y)$, where $|x_0| = 1$ and $|x_1| = n-1$.  Repartition $\varphi$ via
\[
 \hat{\varphi}(x_0; x_1, y) = \varphi(x_0, x_1; y)
\]
and, as before, there exists a finite directed $\Delta_0(x_0; x_1, y)$ such that $\hat{\varphi}$ is a boolean combination of elements of $\Delta_0$.  Again, for any finite $B \subseteq \Mon_y$ and any $a_1 \in \Mon_{x_1}$,
\[
 |S_{\hat{\varphi}}(a_1 \concat B)| \le |S_{\Delta_0}(a_1 \concat B)| \le |\Delta_0| \cdot |B| + 1.
\]
But how do we use this to count $\varphi$-types over $B$ instead of $\hat{\varphi}$-types over $a_1 \concat B$?  We describe the quasi-forest structure given by $\Delta_0(x_0; a_1, B)$.

For each $\delta(x_0; x_1, y), \delta'(x_0; x_1, y) \in \Delta_0$, let
\[
 \psi_{\delta, \delta'}(x_1; y, y') := \forall x_0 \left( \delta'(x_0; x_1, y') \rightarrow \delta(x_0; x_1, y) \right).
\]
Notice that, for all $a_1 \in \Mon_{x_1}$, for all $b, b' \in B$, and for all $\delta, \delta' \in \Delta_0$,
\[
 \models \psi_{\delta, \delta'}(a_1; b, b') \text{ iff } \langle a_1, b, \delta \rangle \unlhd \langle a_1, b', \delta' \rangle,
\]
with the quasi-forest structure $\Fr(a_1 \concat B, \Delta_0)$ described in Remark \ref{Rem_Types}.

\begin{lem}[Quasi-forests determined by $\Psi$-types]\label{Lem_TreesDeter}
 If $p(x_1) \in S_{\Psi}(B \times B)$, $a_1, a_1' \models p$, then, as quasi-forests,
 \[
  \Fr(a_1 \concat B, \Delta_0) \cong \Fr(a_1' \concat B, \Delta_0)
 \]
 via the map $\langle a_1, b, \delta \rangle \mapsto \langle a_1', b, \delta \rangle$.
\end{lem}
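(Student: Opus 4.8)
The plan is straightforward. The proposed map is visibly a bijection between the underlying sets $(a_1 \concat B) \times \Delta_0$ and $(a_1' \concat B) \times \Delta_0$ (its inverse simply interchanges the roles of $a_1$ and $a_1'$), so the only thing to verify is that it both preserves and reflects the quasi-order $\unlhd$. The key point is that, by the choice of the formulas $\psi_{\delta, \delta'}$, the relation $\unlhd$ on $\Fr(a_1 \concat B, \Delta_0)$ is completely determined by which instances $\psi_{\delta, \delta'}(x_1; b, b')$ are satisfied by $a_1$ --- that is, by $\tp_\Psi(a_1 / B \times B)$ --- and likewise for $a_1'$; since $a_1$ and $a_1'$ realize the same $p \in S_{\Psi}(B \times B)$, the two quasi-forests must coincide under the indicated relabeling.

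In more detail, I would first recall that for all $b, b' \in B$ and $\delta, \delta' \in \Delta_0$, by the definition of $\Fr(a_1 \concat B, \Delta_0)$ together with the defining property of $\psi_{\delta, \delta'}$ noted just before the statement,
\[
 \langle a_1, b, \delta \rangle \unlhd \langle a_1, b', \delta' \rangle \text{ iff } \models \psi_{\delta, \delta'}(a_1; b, b'),
\]
and the same with $a_1'$ in place of $a_1$. Next, since $S_{\Psi}(B \times B)$ consists of the maximal consistent subsets of $\{ \psi_{\delta, \delta'}(x_1; b, b')^t : b, b' \in B,\ \delta, \delta' \in \Delta_0,\ t < 2 \}$, the type $p$ decides every such instance; hence, using $a_1 \models p$ and $a_1' \models p$,
\[
 \models \psi_{\delta, \delta'}(a_1; b, b') \text{ iff } \psi_{\delta, \delta'}(x_1; b, b') \in p \text{ iff } \models \psi_{\delta, \delta'}(a_1'; b, b').
\]
Chaining these equivalences gives $\langle a_1, b, \delta \rangle \unlhd \langle a_1, b', \delta' \rangle$ exactly when $\langle a_1', b, \delta \rangle \unlhd \langle a_1', b', \delta' \rangle$, which is precisely the assertion that $\langle a_1, b, \delta \rangle \mapsto \langle a_1', b, \delta \rangle$ is an isomorphism of quasi-forests.

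I do not expect a genuine obstacle here: the lemma is a direct unwinding of the definitions, and its only real content is the bookkeeping observation that $\Fr(a_1 \concat B, \Delta_0)$ is ``$\Psi$-definable'' with parameters drawn from $B$, so that it depends on $a_1$ solely through $\tp_\Psi(a_1 / B \times B)$. The one place where a little care is needed is in confirming that a complete $\Psi$-type over $B \times B$ records precisely the data ``$\models \psi_{\delta, \delta'}(a_1; b, b')$'' for all relevant $b, b', \delta, \delta'$, which is immediate from the definition of $S_{\Psi}(B \times B)$. (Note that we only claim the quasi-\emph{forests} $\Fr$ are isomorphic; to upgrade this to the quasi-trees $\Td$ one would additionally need $\Psi$ to record when $\models (\forall x_0)\, \delta(x_0; a_1, b)$, but this is not required for the intended applications.)
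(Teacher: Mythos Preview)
Your proof is correct and follows essentially the same approach as the paper: both arguments observe that $\langle a_1, b, \delta \rangle \unlhd \langle a_1, b', \delta' \rangle$ holds if and only if $p(x_1) \vdash \psi_{\delta,\delta'}(x_1; b, b')$, and since the same equivalence holds with $a_1'$ in place of $a_1$, the indicated map is an order isomorphism. Your write-up is somewhat more explicit about the bijection and the role of completeness of $p$, but there is no substantive difference.
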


\begin{proof}
 For all $b, b' \in B$, $\delta, \delta' \in \Delta_0$,
 \[
  \langle a_1, b, \delta \rangle \unlhd \langle a_1, b', \delta' \rangle \text{ iff. } p(x_1) \vdash \psi_{\delta, \delta'}(x_1, b, b').
 \]
 Since the same holds for $a_1'$, we get $\langle a_1, b, \delta \rangle \unlhd \langle a_1, b', \delta' \rangle$ if and only if $\langle a_1', b, \delta \rangle \unlhd \langle a_1', b', \delta' \rangle$.
\end{proof}

In particular, for any such $p$, we can define the quasi-forest
\[
 \Fr(p,B,\Delta_0) = ( B \times \Delta_0; \unlhd_p ),
\]
where, for all $b, b' \in B$, $\delta, \delta' \in \Delta_0$,
\[
 \langle b, \delta \rangle \unlhd_p \langle b', \delta' \rangle \text{ iff. } p(x_1) \vdash \psi_{\delta, \delta'}(x_1; b, b').
\]
In particular, for all $a_1 \models p$,
\[
 \Fr(a_1 \concat B, \Delta_0) \cong \Fr(p, B, \Delta_0)
\]
via the map $\langle a_1, b, \delta \rangle \mapsto \langle b, \delta \rangle$.  Similar to the definition of $\nu_{c,\delta}$ as in Remark \ref{Rem_Types} above, for $\langle b, \delta \rangle \in \Fr(p,B,\Delta_0)$, define
\[
 \nu_{p,b,\delta}(x_0; x_1) := \{ \delta'(x_0; x_1, b')^{\text{iff } \langle b', \delta' \rangle \unlhd_p \langle b, \delta \rangle} : b' \in B, \delta' \in \Delta_0 \}.
\]
That is, $\delta'(x_0; x_1, b') \in \nu_{p,b,\delta}$ if and only if
\[
 p(x_1) \vdash (\forall x_0)( \delta(x_0; x_1, b) \rightarrow \delta'(x_0; x_1, b') ).
\]
To deal with the $0$ node, define
\[
 \nu_0(x_0; x_1) := \{ \neg \delta'(x_0; x_1, b') : b' \in B, \delta' \in \Delta_0 \}.
\]
Moreover, as we did in Remark \ref{Rem_Types}, define the \emph{virtual type space}
\[
 \Vir_{\Delta_0}(p, B) := \{ \nu_{p,b,\delta} : \langle b, \delta \rangle \in \Fr(p, B, \Delta_0) \} \cup \{ \nu_0 \}.
\]
In particular, if $a_1 \models p$, then
\[
 \nu_{b,\delta}(x_0) = \nu_{p,b,\delta}(x_0; a_1)
\]
and
\[
 \Vir_{\Delta_0}(a_1 \concat B)(x_0) = \Vir_{\Delta_0}(p, B)(x_0; a_1).
\]
Therefore, we get the following lemma.

\begin{lem}[$\Delta_0$-types determined by $\Psi$-types]\label{Lem_TypesDeter}
 If $p(x_1) \in S_{\Psi}(B \times B)$ and $a_1 \models p$, then
 \[
  S_{\Delta_0}(a_1 \concat B) \subseteq \Vir_{\Delta_0}(p, B)(x_0; a_1).
 \]
\end{lem}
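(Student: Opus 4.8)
The plan is to reduce the claim to the already-established inclusion $S_\Delta(C) \subseteq \Vir_\Delta(C)$ from Remark \ref{Rem_Types}, applied with $C = a_1 \concat B$ and $\Delta = \Delta_0$, together with the identification of virtual type spaces supplied by the displayed equation immediately preceding the statement. Concretely, I would first fix $p(x_1) \in S_\Psi(B \times B)$ and $a_1 \models p$, and observe that since $a_1 \concat B$ is a finite subset of $\Mon_{x_1 y}$ and $\Delta_0$ is a finite directed set of formulas in the single variable $x_0$, Remark \ref{Rem_Types} applies verbatim to give
\[
 S_{\Delta_0}(a_1 \concat B) \subseteq \Vir_{\Delta_0}(a_1 \concat B).
\]

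Next I would invoke the isomorphism $\Fr(a_1 \concat B, \Delta_0) \cong \Fr(p, B, \Delta_0)$ from Lemma \ref{Lem_TreesDeter} (equivalently, its corollary displayed just before the statement), which sends $\langle a_1, b, \delta \rangle \mapsto \langle b, \delta \rangle$. Chasing the definitions of the generic types, this isomorphism carries $\nu_{b,\delta}(x_0) = \nu_{a_1,b,\delta}(x_0)$ — the virtual type attached to the node $\langle a_1, b, \delta\rangle$ in $\Fr(a_1 \concat B, \Delta_0)$ — exactly to $\nu_{p,b,\delta}(x_0; a_1)$, because membership of $\delta'(x_0; a_1, b')$ in the former is governed by $\langle a_1, b', \delta'\rangle \unlhd \langle a_1, b, \delta\rangle$, which by the definition of $\unlhd_p$ and the fact that $a_1 \models p$ is equivalent to $\langle b', \delta'\rangle \unlhd_p \langle b, \delta\rangle$. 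The root node $\nu_0$ corresponds to $\nu_0(x_0; a_1)$ trivially. Hence $\Vir_{\Delta_0}(a_1 \concat B) = \Vir_{\Delta_0}(p, B)(x_0; a_1)$, which is precisely the second displayed equation before the statement; combining this with the inclusion from the previous paragraph yields the claim.

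The only point requiring a little care — and the step I expect to be the main (mild) obstacle — is verifying the equivalence
\[
 \langle a_1, b', \delta' \rangle \unlhd \langle a_1, b, \delta \rangle \iff p(x_1) \vdash \psi_{\delta', \delta}(x_1; b', b),
\]
i.e.\ that the tree relation in $\Fr(a_1 \concat B, \Delta_0)$ depends only on $p = \tp_\Psi(a_1 / B \times B)$ and not on the particular realization $a_1$. This is exactly the content of Lemma \ref{Lem_TreesDeter} (and its proof), so strictly speaking there is nothing new to prove here; one just has to make sure the indices match up, namely that the relevant instance of $\Psi$ is $\psi_{\delta',\delta}$ (with the parameters in the order $b', b$) in accordance with the definition $\psi_{\delta,\delta'}(x_1; y, y') := \forall x_0(\delta'(x_0; x_1, y') \to \delta(x_0; x_1, y))$. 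Once this bookkeeping is in place, the proof is a two-line application of the cited results, so I would present it essentially as: ``By Remark \ref{Rem_Types}, $S_{\Delta_0}(a_1 \concat B) \subseteq \Vir_{\Delta_0}(a_1 \concat B)$; by the displayed identities preceding the statement, the right-hand side equals $\Vir_{\Delta_0}(p,B)(x_0; a_1)$.''
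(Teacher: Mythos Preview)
Your proposal is correct and is precisely the paper's approach: the paper does not even give a separate proof, but states the lemma as an immediate consequence of the displayed identities $\nu_{b,\delta}(x_0) = \nu_{p,b,\delta}(x_0; a_1)$ and $\Vir_{\Delta_0}(a_1 \concat B)(x_0) = \Vir_{\Delta_0}(p, B)(x_0; a_1)$ together with the inclusion $S_\Delta(C) \subseteq \Vir_\Delta(C)$ from Remark~\ref{Rem_Types}. Your careful check of the index conventions for $\psi_{\delta,\delta'}$ is exactly the bookkeeping underlying those displayed identities, so nothing is missing.
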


In particular,
\[
 S_\varphi(B) \subseteq \bigcup_{p \in S_\Psi(B \times B)} \Vir_{\Delta_0}(p, B).
\]
Hence, without any more work, we get the bound
\[
 | S_\varphi(B) | \le ( | \Delta_0 | \cdot |B| + 1 ) | S_{\Psi}(B \times B) |.
\]
With no further analysis, induction would yield $\pi_T(n) \le 2^n-1$.  For simplicity, assume $n=2$ and hence $|x_1| = 1$.  Now, by full VC-minimality, there exists a finite directed $\Delta_1(x_1; y, y')$ such that each $\psi(x_1; y, y') \in \Psi$ is a boolean combination of elements of $\Delta_1$.  Therefore,
\begin{align*}
 | S_\varphi(B) | \le \ & ( | \Delta_0 | \cdot |B| + 1 ) | S_{\Delta_1}(B \times B) | \le \\ & ( | \Delta_0 | \cdot |B| + 1 ) \cdot ( | \Delta_1 | \cdot |B|^2 + 1) = \BigO(|B|^3).
\end{align*}
In other words, $\pi_T(2) \le 3$.  We can get $\pi_T(2) = 2$ by paying closer attention to our counting.

Apply Lemma \ref{Lem_SumDist2} to $B \times B$ and $\Delta_1$.  Let $p_0 < ... < p_m$ enumerate $S_{\Delta_1}(B \times B)$ inside $\Vir_{\Delta_1}(B \times B)$, hence
\[
 \sum_{i < m} \dist(p_i, p_{i+1}) \le 2 |B|^2 |\Delta_1|.
\]
For each $\delta_1(x_1; b, b') \in \diff(p_i, p_{i+1})$, at most the inclusion of one formula $\psi_{\delta_0, \delta_0'}(x_1; b, b')$ is changed between $p_i$ and $p_{i+1}$ for some $\delta_0, \delta_0' \in \Delta_0$.  That is, either
\begin{itemize}
 \item $\langle b, \delta_0 \rangle \unlhd_{p_i} \langle b', \delta'_0 \rangle$ and $\langle b, \delta_0 \rangle \nunlhd_{p_{i+1}} \langle b', \delta'_0 \rangle$, or
 \item $\langle b, \delta_0 \rangle \nunlhd_{p_i} \langle b', \delta'_0 \rangle$ and $\langle b, \delta_0 \rangle \unlhd_{p_{i+1}} \langle b', \delta'_0 \rangle$.
\end{itemize}
This results in at most one new virtual $\Delta_0$-type in the corresponding virtual $\Delta_0$-type space $\Vir_{\Delta_0}(p_{i+1}, B)$, namely, the one corresponding to $\langle b', \delta'_0 \rangle$ (whether or not it includes $\delta_0(x_0; x_1, b)$).  Therefore,
\[
 | \Vir_{\Delta_0}(p_{i+1}, B) \setminus \Vir_{\Delta_0}(p_i, B) | \le | \diff(p_i, p_{i+1}) | = \dist(p_i, p_{i+1}).
\]
Therefore,
\begin{align*}
 \left| \bigcup_{p \in S_{\Delta_1}} \Vir_{\Delta_0}(p, B) \right| \le \ & |\Vir_{\Delta_0}(p_0, B)| + \sum_{i < m} | \Vir_{\Delta_0}(p_{i+1}, B) \setminus \Vir_{\Delta_0}(p_i, B) | \le \\ & |\Vir_{\Delta_0}(p_0, B)| + \sum_{i < m} \dist(p_i, p_{i+1}) \le \\ & |\Vir_{\Delta_0}(p_0, B)| + 2 |B|^2 |\Delta_1| \le \\ & 2 |B|^2 |\Delta_1| + |B| |\Delta_0| + 1.
\end{align*}
In particular,
\[
 | S_\varphi(B) | = \BigO( |B|^2 ).
\]
Therefore, $\pi_T(2) = 2$.  The argument is similar for $n > 2$.

\section{General VC-Minimal Theories}

In the general case, by Theorem \ref{Thm_uBalls}, we can assume that the formula whose VC-codensity we are computing is such that each instance is a union of a uniformly bounded number of balls.  However, the problem comes in distinguishing these balls from one another since, in general, they are not individually definable over the parameter used in the instance considered.  So we will need some way of determining irreducible unions of balls and do this relative to a given type-space.

For the remainder of this section, we will give a proof of Theorem \ref{Thm_VCminVCden}, following the outline sketched in Section \ref{Sect_FullyVCMin}.  That is, if $T$ is a VC-minimal theory, then we will show that
\[
 \pi_T(2) = 2.
\]

\subsection{Construction Setup}

Fix $T$ a VC-minimal theory and $\varphi(x;y)$ is a paritioned formula with $|x| = 2$.  Repartition as
\[
 \hat{\varphi}(x_0; x_1, y) := \varphi(x_0, x_1; y).
\]
By Theorem \ref{Thm_uBalls}, there exists $N_0 < \omega$, $\delta_0(x_0;z)$ directed, and $\Gamma_0(x_0; x_1, y)$ a finite set of formulas such that
\begin{itemize}
 \item $\hat{\varphi}(x_0; x_1, y)$ is $T$-equivalent to a boolean combination of elements of $\Gamma_0$, and
 \item each instance of a formula from $\Gamma_0$ is $T$-equivalent to a union of at most $N_0$ instances of $\delta_0$.
\end{itemize}

Fix a finite $B \subseteq \Mon_y$ and we aim to count the size of $S_\varphi(B)$.  As each type in $S_\varphi(B)$ is implied by a type in $S_{\Gamma_0}(B)$, we have
\[
 | S_\varphi(B) | \le | S_{\Gamma_0}(B) |,
\]
so we will count $\Gamma_0$-types over $B$ instead (correctly repartitioned).  Each $\Gamma_0$-type is, in fact, determined by an instance of $\delta_0$, just not necessarily definably over $B$.  For each $a_1 \in \Mon_{x_1}$ and each $c \in \Mon_z$, define
\[
 \nu_{a_1,c}(x_0; x_1) := \left\{ \gamma(x_0; x_1, b)^{\text{iff } \models \forall x_0 (\delta_0(x_0; c) \rightarrow \gamma(x_0; a_1, b) )} : b \in B, \gamma \in \Gamma_0 \right\},
\]
and let
\[
 \Vir(a_1,B) := \{ \nu_{a_1,c}(x_0; x_1) : c \in \Mon_z \} \cup \{ \nu_0(x_0; x_1) \},
\]
where, as before,
\[
 \nu_0(x_0; x_1) := \{ \neg \gamma(x_0; x_1, b) : b \in B, \gamma \in \Gamma_0 \}.
\]
Then, it is easy to check that,
\begin{equation}\label{Eq_VirtualTypes}
 S_{\Gamma_0}(B) \subseteq \bigcup \{ \Vir(a_1; B) : a_1 \in \Mon_{x_1} \}.
\end{equation}
Therefore, it suffices to bound this set.  As we did in Section \ref{Sect_FullyVCMin} above, we will use types in the $x_1$ variable to bound this.  We code this now.

First of all, each formula $\gamma \in \Gamma_0$, each $a_1 \in \Mon_{x_1}$, and each $b \in B$, $\gamma(x_0; a_1, b)$ is $T$-equivalent to a union of at most $N_0$ instances of $\delta_0$.  If $\gamma(x_0, a_1, b)$ is $T$-equivalent to $\bigvee_{i < n} \delta_0(x_0; c_i)$ for $c_i \in \Mon_z$ with $n \le N_0$ minimal such, then we will call the $\delta_0(x_0; c_i)$'s \emph{components} of $\gamma(x_0; a_1, b)$.  Note that, by directedness and minimality of $n$, components are unique up to permutation and $T$-equivalence.

We code the minimal $n \le N_0$ as follows: For each such $n$ and $\gamma \in \Gamma_0$, let
\[
 \psi''_{\gamma,n}(x_1, y, z_0, ..., z_{n-1}) := \forall x_0 \left( \gamma(x_0; x_1, y) \leftrightarrow \bigvee_{i < n} \delta_0(x_0; z_i) \right)
\]
and let
\[
 \psi'_{\gamma,n}(x_1, y) := (\exists z_i)_{i < n} [\psi''_{\gamma,n}] \wedge \neg (\exists z_i)_{i \le n} [\psi''_{\gamma,n+1}].
\]

The next step is to code the $\Gamma_0$-types that correspond to the generic type of a component of $\gamma(x_0; a_1, b)$.  So, for each $m < \omega$, $n \le N_0$, and $\mu \subseteq {}^{m \times \Gamma_0} 2$, consider the following formula, which will code the generic $\Gamma_0$-type over $x_1 \concat \{ w_0, ..., w_{m-1} \}$ generated by components of $\gamma(x_0; x_1, y)$ in terms of $\mu$:
\begin{align*}
 & \psi_{\gamma,n,m,\mu} (x_1, y, w_0, ..., w_{m-1}) := \psi'_{\gamma,n}(x_1, y) \wedge \\ & (\exists z_i)_{i < n} \Bigr[ \psi''_{\gamma,n}(x_1, y, z_0, ..., z_{n-1}) \wedge \\ & \bigwedge_{s \in {}^{m \times \Gamma_0} 2} \left( \bigvee_{i < n} \bigwedge_{j < m, \gamma' \in \Gamma_0} [ \forall x_0 ( \delta_0(x_0; z_i) \rightarrow \gamma'(x_0; x_1, w_j) ) ]^{s(j,\gamma')} \right)^{\text{iff } s \in \mu} \Bigr].
\end{align*}
In other words: Fix $a_1 \in \Mon_{x_1}$, $b \in B$, $b_0, ..., b_{m-1} \in B$ and, any of $c_0, ..., c_{n-1} \in \Mon_z$ such that $\{ \gamma(x; c_i) : i < n \}$ is the set of components of $\gamma(x_0; a_1, b)$.  For each $i < n$, let $\nu(i) \in {}^{m \times \Gamma_0} 2$ be given as follows:
\[
 [\nu(i)](j,\gamma') = 1 \text{ iff. } \models \forall x_0 ( \delta_0(x_0; c_i) \rightarrow \gamma'(x_0, a_1, b_j) ),
\]
which codes the generic $\Gamma_0$-type over $a_1 \concat \{ b_j : j < m \}$ generated by $\delta_0(x_0; c_i)$.  Finally, let
\[
 \mu = \{ \nu(i) : i < n \}.
\]
Then, we see that
\[
 \models \psi_{\gamma,n,m,\mu} (a_1, b, b_0, ..., b_{m-1}).
\]
Moreover, both $n \le N_0$ and $\mu \subseteq {}^{m \times \Gamma_0} 2$ are unique such.  For each $m < \omega$, define
\[
 \Psi_m(x_1; y, w_0, ..., w_{m-1}) := \{ \psi_{\gamma,n,m,\mu} : \gamma \in \Gamma_0, n \le N_0, \mu \in {}^{m \times \Gamma_0} 2 \}.
\]
and let $\Psi := \Psi_{2^{N_0}}$ (our choice to consider only $m \le 2^{N_0}$ will be made clear shortly).  By VC-minimality, there exists $\delta_1(x_1; u)$ directed and $N_1 < \omega$ such that every instance of $\Psi$ is boolean combination of at most $N_1$ instances of $\delta_1$.  Note that this also covers instances of $\Psi_m$ for $m < 2^{N_0}$ by repeating entries.

The goal now is to build a $\Psi$-type space over a set of size $\BigO( |B|^2 )$ such that each type determines a virtual $\Gamma_0$-type space, $\Vir(a_1,B)$.  Then, as we did in Section \ref{Sect_FullyVCMin}, we use this to bound the size of $\Gamma_0$-types over $B$.  To do this, we will need some definitions about how to relate various $\Gamma_0$-type spaces via $\Psi$.

\begin{defn}\label{Defn_Decides}
 Fix $b \in B$, $\gamma \in \Gamma_0$, $m < \omega$, $\overline{c} \in B^m$, and $p(x_1)$ any partial type.  We say that $p$ \emph{decides generic $\Gamma_0$-types of $\gamma(x_0; x_1, b)$ over $\overline{c}$} if, for some $n < \omega$ and $\mu \subseteq {}^{m \times \Gamma_0} 2$,
 \[
  p(x_1) \vdash \psi_{\gamma,n,m,\mu}(x_1, b, \overline{c}).
 \]
 In this case, let $\mu_{p,\gamma,b,\overline{c}} := \mu$ for the unique such $\mu$, the code for said type space.  Also, let $N_{p,\gamma,b,\overline{c}} := | \mu |$, which denotes the size of the type space.
\end{defn}

Clearly if $p(x_1)$ implies a type in $S_{\Psi_m}( \{ \langle b, \overline{c} \rangle \} )$, then $p$ decides generic $\Gamma_0$-types of $\gamma(x_0; x_1, b)$ over $\overline{c}$.  Moreover, if $\overline{c}_0 \subseteq \overline{c}$ is any subsequence, then $p$ decides generic $\Gamma_0$-types of $\gamma(x_0; x_1, b)$ over $\overline{c}_0$ as well.

Note that $N_{p,\gamma,b,\overline{c}} \le N_0$ for any choice of $b$, $\gamma$, $\overline{c}$, and $p$ that decides generic $\Gamma_0$-types of $\gamma(x_0; x_1, b)$ over $\overline{c}$.  This is because, for the formula $\psi_{\gamma,n,|\overline{c}|,\mu}(x_1; b, \overline{c})$ to even be consistent, we must have $N_{p,\gamma,b,\overline{c}} = |\mu| \le n \le N_0$.

Fix $b$, $\gamma$, $\overline{c}$, and $p$ such that $p$ decides generic $\Gamma_0$-types of $\gamma(x_0; x_1, b)$ over $\overline{c}$ and fix $\overline{c}_0 \subseteq \overline{c}$.  Then there is a canonical map $\pi_{p,\gamma,b,\overline{c},\overline{c}_0} : \mu_{p,\gamma,b,\overline{c}} \rightarrow \mu_{p,\gamma,b,\overline{c}_0}$ which is simply projection.  That is, if $\overline{c} = \langle c_i : i < m \rangle$ and $\overline{c}_0 = \langle c_{i_0}, ..., c_{i_{k-1}} \rangle$ for $i_0 < ... < i_{k-1} < n$ and $s \in \mu_{p,\gamma,b,\overline{c}}$, then
\[
 [\pi_{p,\gamma,b,\overline{c},\overline{c}_0}(s)](j,\gamma') = s(i_j,\gamma').
\]
In general, this map is surjective and, when $N_{p,\gamma,b,\overline{c}} = N_{p,\gamma,b,\overline{c}_0}$, it is bijective.  This leads us to the following definition.

\begin{defn}\label{Defn_PreservesSize}
 Fix $b \in B$, $\gamma \in \Gamma_0$, $m, m' < \omega$, $\overline{c} \in B^m$, and $\overline{c}' \in B^{m'}$, and $p(x_1)$ any partial type.  If $p$ decides generic $\Gamma_0$-types of $\gamma(x_0; x_1, b)$ over $\overline{c} \concat \overline{c}'$, then we say that $\overline{c}$ and $\overline{c}'$ \emph{generate the same irreducibles of $\gamma(x_0; x_1, b)$ with respect to $p$} if
 \[
  N_{p,\gamma,b,\overline{c}} = N_{p,\gamma,b,\overline{c} \concat \overline{c}'} = N_{p,\gamma,b,\overline{c}'}.
 \]
\end{defn}

If $\overline{c}$ and $\overline{c}'$ generate the same irreduciles of $\gamma(x_0; x_1, b)$ with respect to $p$, then there is a canonical bijection between $\mu_{p,\gamma,b,\overline{c}}$ and $\mu_{p,\gamma,b,\overline{c}'}$, namely
\[
 \rho_{p,\gamma,b,\overline{c},\overline{c}'} := \pi_{p,\gamma,b,\overline{c} \concat \overline{c}', \overline{c}}^{-1} \circ \pi_{p,\gamma,b,\overline{c} \concat \overline{c}', \overline{c}'}.
\]
Thus, we can use the information of $\Psi$-types over small parts of $B$ and ``glue'' this information together via these bijections to get information about the $\Gamma_0$-type over all of $B$.  We detail this construction now.

\subsection{Primary Construction}

We are now ready to begin the primary construction.  Fix $b \in B$, $\gamma \in \Gamma_0$, and put $<$ an arbitrary linear order on $B$.  Let $\beta_0 := B$, and let $S_0 := S_{\Psi_1}(b \concat \beta_0)$.  Note that, for each $p \in S_0$, $p$ decides generic $\Gamma_0$-types of $\gamma(x_0; x_1, b)$ over $b'$ for all $b' \in B$.  Therefore, for each $p \in S_0$, the following subset is well-defined
\[
 \beta^p_0 := \{ b' \in \beta_0 : N_{p,\gamma,b,b'} > 1 \}.
\]
These correspond to the elements $b'$ that, according to $p$, have more than one $\Gamma_0$-type over $b'$ generic to some component of $\gamma(x_0; x_1, b)$.  This inherits the suborder $<$ from $\beta_0 = B$.  Let
\[
 \beta_{p,1} := \{ \langle b', b'' \rangle : b', b'' \in \beta^p_0, b' < b'' \text{ are $<$-consecutive} \},
\]
which inherits a linear order $<$ from $\beta^p_0$.  Let
\[
 \beta_1 := \left( \bigcup \{ \beta_{p,1} : p \in S_0 \} \right) \cup \{ \langle b', b' \rangle : b' \in \beta_0 \}.
\]
Finally, let
\[
 S_1 := S_{\Psi_2}(b \concat \beta_1).
\]
By including a copy of the diagonal of $\beta_0$ in $\beta_1$, we ensure that, for each $q \in S_1$, there exists $p \in S_0$ such that $q(x_1) \vdash p(x_1)$.

In general, suppose that we have $q \in S_n$ for $n \ge 1$.  By induction, there exists $p \in S_{n-1}$ such that $q \vdash p$.  Supposing $\beta_{p,n}$ is defined with a linear order $<$, let
\[
 \beta^q_{p,n} := \{ \overline{c} \in \beta_{p,n} : N_{q,\gamma,b,\overline{c}} > N_{q,\gamma,b,\overline{c}_0} \text{ or } N_{q,\gamma,b,\overline{c}} > N_{q,\gamma,b,\overline{c}_1} \},
\]
where $\overline{c}_0$ is the first half of $\overline{c}$ and $\overline{c}_1$ is the second half of $\overline{c}$.  That is, $\overline{c}_0$ and $\overline{c}_1$ do not generate the same irreducibles of $\gamma(x_0; x_1, b)$ with respect to $q$.  This is given the suborder $<$ from $\beta_{p,n}$.  Define
\[
 \beta_{q,n+1} := \{ \overline{c} \concat \overline{c}' : \overline{c}, \overline{c}' \in \beta^q_{p,n}, \overline{c} < \overline{c}' \text{ are $<$-consecutive} \},
\]
which inherits the obvious order $<$ from $\beta^q_{p,n}$.  Let
\[
 \beta_{n+1} := \left( \bigcup \{ \beta_{q,n+1} : q \in S_n \} \right) \cup \{ \overline{c} \concat \overline{c} : \overline{c} \in \beta_n \}
\]
and let
\[
 S_{n+1} := S_{\Psi_{2^{n+1}}}(b \concat \beta_{n+1}).
\]
Notice that, as we included a copy of the diagonal of $\beta_n$ in $\beta_{n+1}$, for each $q \in S_{n+1}$, there exists $p \in S_n$ such that $q \vdash p$.

Now, as this all depends on $b \in B$ and $\gamma \in \Gamma_0$, define $\beta_{\gamma,b} := \beta_{N_0}$, let
\[
 \beta := \bigcup \{ b \concat \beta_{\gamma,b} : \gamma \in \Gamma_0, b \in B \},
\]
and let $S := S_{\Psi}(\beta)$.

This concludes our construction.  We need only show that this works.

\subsection{Verifying Construction Works}

First, its clear that $S$ is the set of $\Psi$-types over $\beta \subseteq B^{2^{N_0}+1}$.  We thus need to check that $|\beta| = \BigO(|B|^2)$ and, for each $p(x_1) \in S$, for all $a_1, a_1' \models p$, $\Vir(a_1, B) = \Vir(a_1', B)$.  Moreover, we then need to check that this implies that $|S_\varphi(B)| = \BigO(|B|^2)$, as desired.

For the next two lemmas, fix $b \in B$ and $\gamma \in \Gamma_0$.

\begin{lem}\label{Lem_ConstrEnds}
 The construction terminates by stage $n = N_0$.  That is, for all $q \in S_{N_0}$, $\beta^q_{p,N_0} = \emptyset$ (where $p \in S_{N_0-1}$ is such that $q \vdash p$).
\end{lem}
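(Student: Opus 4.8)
The plan is to track how the quantity $N_{q,\gamma,b,\overline{c}}$ evolves as the construction passes from stage to stage, and to argue that it must strictly increase whenever $\beta^q_{p,n}$ is nonempty. Recall that for any $\overline{c}$ over which $q$ decides generic $\Gamma_0$-types of $\gamma(x_0;x_1,b)$, we have $1 \le N_{q,\gamma,b,\overline{c}} \le N_0$; the lower bound holds because $\gamma(x_0;x_1,b)$ is a nonempty union of components (if it were empty, $n=0$ and there is nothing to track — one should dispose of that degenerate case separately, or note $N_0 \ge 1$ may be assumed). So I would first isolate the monotonicity facts: whenever $\overline{c}_0 \subseteq \overline{c}$ and $q$ decides generic types over $\overline{c}$, the projection $\pi_{q,\gamma,b,\overline{c},\overline{c}_0}$ is surjective, hence $N_{q,\gamma,b,\overline{c}} \ge N_{q,\gamma,b,\overline{c}_0}$; and likewise $N_{q,\gamma,b,\overline{c}} \ge N_{q,\gamma,b,\overline{c}_1}$.

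Next I would set up the key invariant. For $q \in S_n$ arising over an element $\overline{c} \in \beta_n$, say $\overline{c}$ was built at an earlier stage as $\overline{d}\concat\overline{d}'$ with $\overline{d},\overline{d}'$ consecutive in some $\beta^{q'}_{p',n-1}$ — where membership in $\beta^{q'}_{p',n-1}$ means precisely $N_{q',\gamma,b,\overline{d}} > N_{q',\gamma,b,\overline{d}_0}$ or $N_{q',\gamma,b,\overline{d}} > N_{q',\gamma,b,\overline{d}_1}$ for $\overline{d}$ split into halves. Since $q \vdash q'$, the $N$-values are computed consistently, so one deduces that the halves of $\overline{c}$ already carry $N$-value strictly larger than the halves of those halves, etc. Unwinding this down to the singletons from $\beta_0$, the claim becomes: if $\overline{c} \in \beta^q_{p,n}$, then $N_{q,\gamma,b,\overline{c}} \ge n+1$ (using that at stage $0$, membership in $\beta_0^p$ meant $N_{p,\gamma,b,b'} > 1$, i.e. $\ge 2$, and each subsequent stage forces at least one more unit of growth in $N$ via the strict inequality defining $\beta^q_{p,n}$). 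Combined with $N \le N_0$, this forces $\beta^q_{p,N_0} = \emptyset$: if some $\overline{c}$ were in it, we would get $N_{q,\gamma,b,\overline{c}} \ge N_0 + 1 > N_0$, a contradiction.

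I would organize the induction cleanly with a single auxiliary claim: \emph{for $1 \le n \le N_0$ and $q \in S_n$ with $q \vdash p \in S_{n-1}$, every $\overline{c} \in \beta^q_{p,n}$ satisfies $N_{q,\gamma,b,\overline{c}} \ge n+1$.} The base case $n=1$ is the definition of $\beta_0^p$ pushed through $\beta_{p,1}$ and the fact that an element of $\beta^q_{p,1}$ has a half with $N$-value $\ge 2$, so by surjectivity of projection $N_{q,\gamma,b,\overline{c}} \ge 2$. For the inductive step, an element $\overline{c} \in \beta^q_{p,n}$ has a half $\overline{c}_i$ which itself lies in $\beta^{q}_{\cdot,n-1}$-territory (being consecutive-pair-constructed from $\beta^{q'}_{\cdot,n-1}$ with $q \vdash q'$), so by induction $N_{q,\gamma,b,\overline{c}_i} \ge n$, and then the defining strict inequality of $\beta^q_{p,n}$ gives $N_{q,\gamma,b,\overline{c}} > n$, i.e. $\ge n+1$.

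The main obstacle I anticipate is purely bookkeeping: making precise that when $q \vdash q' \vdash \cdots$ runs down the chain $S_n \to S_{n-1} \to \cdots \to S_0$, the various functionals $N_{\cdot,\gamma,b,\cdot}$ and the projection maps $\pi$ are genuinely compatible, so that "the half of $\overline{c}$ had larger $N$-value at the previous stage" transfers to "has larger $N$-value as computed by $q$." This rests on the observation (already noted after Definition~\ref{Defn_Decides}) that if $p$ decides generic $\Gamma_0$-types over $\overline{c}$ and $q \vdash p$, then $\mu_{q,\gamma,b,\overline{c}} = \mu_{p,\gamma,b,\overline{c}}$ and hence $N_{q,\gamma,b,\overline{c}} = N_{p,\gamma,b,\overline{c}}$ — so the whole argument takes place with a single, well-defined value attached to each pair $(\overline{c}, q)$. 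Once that compatibility is stated, the induction is short.
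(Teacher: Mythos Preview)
Your proposal is correct and follows essentially the same approach as the paper: the paper's proof is a two-line sketch observing that $N_{q,\gamma,b,\overline{c}} \le N_0$ always and that ``each iteration of the construction increases the value of this $N_{q,\gamma,b,\overline{c}}$ by at least one,'' and your inductive claim $N_{q,\gamma,b,\overline{c}} \ge n+1$ for $\overline{c} \in \beta^q_{p,n}$ is precisely the careful formulation of that increase. The bookkeeping you worry about (compatibility of $N$-values along the chain $q \vdash p \vdash \cdots$, and that diagonal elements never enter $\beta^q_{p,n}$ since their halves coincide) is straightforward and your outline handles it correctly.
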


\begin{proof}
 If any $q$ decides generic $\Gamma_0$-types of $\gamma(x_0; x_1, b)$ over $\overline{c}$, then $N_{q,\gamma,b,\overline{c}} \le N_0$.  Since each iteration of the construction increases the value of this $N_{q,\gamma,b,\overline{c}}$ by at least one, it cannot continue past $N_0$ steps.
\end{proof}

\begin{lem}\label{Lem_ConstrWorks}
 For all $p \in S_{N_0}$, $p$ decides generic $\Gamma_0$-types of $\gamma(x_0; x_1, b)$ over $B$.
\end{lem}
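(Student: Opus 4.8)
The plan is to show that for every $a_{1} \models p$ the pair $(n,\mu^{*})$ --- where $n$ is the least number of instances of $\delta_{0}$ whose disjunction is $T$-equivalent to $\gamma(x_{0};a_{1},b)$, and $\mu^{*} \subseteq {}^{B \times \Gamma_{0}}2$ is the set of generic $\Gamma_{0}$-types over $B$ generated by the components of $\gamma(x_{0};a_{1},b)$ --- does not depend on the choice of $a_{1}$. Since each such $a_{1}$ satisfies $\psi_{\gamma,n,|B|,\mu^{*}}(x_{1},b,\overline{B})$ for exactly one such pair (fixing an enumeration $\overline{B}$ of $B$), and $p$ is complete, this gives $p \vdash \psi_{\gamma,n,|B|,\mu^{*}}(x_{1},b,\overline{B})$, which is precisely the statement that $p$ decides generic $\Gamma_{0}$-types of $\gamma(x_{0};x_{1},b)$ over $B$. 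That $n$ is determined by $p$ is easy: for each $b' \in B$ the $2^{N_{0}}$-fold repetition $\langle b',\dots,b'\rangle$ lies in $\beta_{N_{0}}$ (iterate the diagonal embeddings $\beta_{k} \hookrightarrow \beta_{k+1}$), so $p$ decides some instance $\psi_{\gamma,n,2^{N_{0}},\mu}(x_{1},b,\langle b',\dots,b'\rangle)$, whose conjunct $\psi'_{\gamma,n}(x_{1},b)$ pins down $n$ (which is uniquely determined by $a_{1}$ and $b$, as recorded in the construction).

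The substance is recovering $\mu^{*}$. The idea is to track the at most $n$ components of $\gamma(x_{0};a_{1},b)$ as $b'$ ranges over $B$, using only data that $p$ decides. As noted just after Definition~\ref{Defn_Decides}, since $p$ decides generic $\Gamma_{0}$-types over every tuple occurring in $\beta_{N_{0}}$, it decides them over every sub-tuple, and the projections $\pi_{p,\gamma,b,\overline{c},\overline{c}_{0}}$ between the codes $\mu_{p,\gamma,b,\overline{c}}$ are determined by $p$. Now replay the construction. At stage $0$, $p$ knows for each $b' \in B$ the set $\mu_{p,\gamma,b,b'} \subseteq {}^{\Gamma_{0}}2$ of values realized by the components at $b'$; the set $\beta^{p}_{0}$ records exactly the $b'$ at which two components disagree, while at every other point all $n$ components take the single value named by $\mu_{p,\gamma,b,b'}$. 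For $<$-consecutive $b' < b''$ in $\beta^{p}_{0}$, the code $\mu_{p,\gamma,b,\langle b',b''\rangle}$ (which $p$ decides, since $\langle b',b''\rangle$ propagates into $\beta_{N_{0}}$ via the diagonals) matches the values at $b'$ with those at $b''$, up to the residual ambiguity of telling apart components that still share a value over $\{b',b''\}$; but that ambiguity is exactly what makes $\langle b',b''\rangle$ ``bad'' at stage $1$, so it is resolved at stage $2$, and so on. The diagonal copies of $\beta_{k}$ in $\beta_{k+1}$ ensure the earlier-resolved matchings persist into the stage-$(k+1)$ types.

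By Lemma~\ref{Lem_ConstrEnds}, this stabilizes by stage $N_{0}$: writing $p' \in S_{N_{0}-1}$ for the type with $p \vdash p'$, we have $\beta^{p}_{p',N_{0}} = \emptyset$, so for every $\overline{c} \in \beta_{p',N_{0}}$ the two halves of $\overline{c}$ generate the same irreducibles with respect to $p$, and the matching bijections $\rho_{p,\gamma,b,\overline{c}_{0},\overline{c}_{1}}$ are all available. Composing these bijections along the consecutive-pair structure assembled through stages $0,1,\dots,N_{0}-1$, together with the stagewise codes, reconstructs for each of the $n$ tracks a single well-defined function $B \times \Gamma_{0} \to 2$ (filling in the remaining points $b'$, those with $N_{p,\gamma,b,b'}=1$, with their unique values); the set of these functions is $\mu^{*}$. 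Since every ingredient of this computation --- the codes $\mu_{p,\gamma,b,\overline{c}}$, the projections $\pi$, the bijections $\rho$ --- is determined by $p$, the set $\mu^{*}$ is independent of $a_{1} \models p$. Hence $p \vdash \psi_{\gamma,n,|B|,\mu^{*}}(x_{1},b,\overline{B})$, as required.

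The main obstacle is the bookkeeping in the middle step: one must verify that the consecutive-pair tuples assembled over the $N_{0}$ stages genuinely connect all of $B$ --- that no pair of points of $B$ has a component-matching escaping the chain --- and that $N_{0}$ stages really suffice. The latter is where the bound $n \le N_{0}$ on the number of components is used: the construction cannot iterate past stage $N_{0}$ because, exactly as in the proof of Lemma~\ref{Lem_ConstrEnds}, each further stage with a nonempty bad set forces some $N_{p,\gamma,b,\overline{c}}$ to strictly grow while it stays bounded by $N_{0}$. The former requires checking that any point whose component-matching is still unresolved at stage $k$ lies between two $<$-consecutive members of the stage-$k$ bad set, hence is incorporated at stage $k+1$; one also has to keep track that ``deciding over a tuple'' passes to sub-tuples and is preserved under the diagonal embeddings, so that the single type $p \in S_{N_{0}}$ decides all the smaller instances invoked along the way.
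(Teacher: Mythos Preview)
Your proposal is correct and follows essentially the same approach as the paper. Both arguments reconstruct the global code $\mu \subseteq {}^{B \times \Gamma_0}2$ from the local codes $\mu_{p,\gamma,b,\overline{c}}$ that $p$ decides, gluing them together via the projections $\pi$ and bijections $\rho$; the paper is simply more explicit about the mechanics, fixing a single anchor $\overline{c}^* \in \beta^*_{n-1}$ at the highest nonempty level and, for each $b' \in B$, tracing an explicit chain $\overline{c}_0 < \dots < \overline{c}_k$ of $<$-consecutive elements in the appropriate $\beta^*_m$ back to $\overline{c}^*$, which is exactly the ``connecting all of $B$'' bookkeeping you flag as the main obstacle in your final paragraph. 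One small remark: your appeal to $p$ being ``complete'' is slightly misleading, since $p$ is only a $\Psi_{2^{N_0}}$-type over $b \concat \beta_{N_0}$ and the formula $\psi_{\gamma,n,|B|,\mu^*}(x_1,b,\overline{B})$ does not lie in that language; what you actually use (and what suffices) is that every realization $a_1 \models p$ in the monster model satisfies the same instance, whence $p \vdash \psi_{\gamma,n,|B|,\mu^*}(x_1,b,\overline{B})$.
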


That is, we must show that, for each $p \in S_{N_0}$, there exists $n < \omega$ and $\mu \subseteq {}^{B \times \Gamma_0} 2$ so that
\[
 p(x_1) \vdash \psi_{\gamma,n,|B|,\mu}(x_1, b, B).
\]
We do this by tracing through $B$ using the formulas in $p$.

\begin{proof}
 Fix $n \le N_0$ and $p_0, ..., p_n = p$ with $p_i \in S_i$ and $p_{i+1} \vdash p_i$.  For simplicity of notation, let $\beta^*_i = \beta^{p_i}_{p_{i-1},i}$ for $i \le n$.  Furthermore, choose $n$ such that $\beta^*_n = \emptyset$ and $\beta^*_{n-1} \neq \emptyset$.  By Lemma \ref{Lem_ConstrEnds}, such an $n \le N_0$ exists.  Choose $\overline{c}^* \in \beta^*_{n-1}$ to be $<$-minimal.  Clearly $p$ decides generic $\Gamma_0$-types of $\gamma(x_0; x_1, b)$ over $\overline{c}^*$, so let $\mu^* = \mu_{p,\gamma,b,\overline{c}^*}$.  From this we must build the desired $\mu \subseteq {}^{B \times \Gamma_0} 2$.  Choose $s \in \mu^*$ (which codes a generic $\Gamma_0$-type over $\overline{c}^*$ of some component of $\gamma(x_0; x_1, b)$) and we show how to extend $s$ canonically to a function $s_* : B \times \Gamma_0 \rightarrow 2$.  Setting $\mu = \{ s_* : s \in \mu^* \}$, we are done.

 So fix $b' \in B$, $\gamma' \in \Gamma_0$.  If $b' \in \overline{c}^*$, then $s_*(b',\gamma') = s(i,\gamma')$, where $b'$ is the $i$th element of $\overline{c}^*$.  If $b' \notin \beta^{p_0}_0$, then $N_{p,\gamma,b,b'} = 1$, hence there is a unique $s' \in \mu_{p,\gamma,b,b'}$.  Let $s_*(b',\gamma') = s'(0,\gamma')$.  Otherwise, choose $0 < m < n$ maximal such that $b' \in \overline{c} \in \beta_m$.  Choose $\overline{c}' \in \beta^*_m$ such that:
 \begin{itemize}
  \item if $m = n-1$, $\overline{c}' = \overline{c}^*$, and
	\item if $m < n-1$, $\overline{c}' \subseteq \overline{c}'' \in \beta^*_{m+1}$ and $\overline{c}'$ is $<$-closest such to $\overline{c}$.
 \end{itemize}
 Now, there exists a chain
 \[
  \overline{c}_0 < ... < \overline{c}_k
 \]
 be of $<$-consecutive elements in $\beta^*_m$ with $\overline{c}_0 = \overline{c}$ or $\overline{c}_0 = \overline{c}'$ and similarly for $\overline{c}_k$.  By the choice of $\overline{c}'$, $\overline{c}_i \concat \overline{c}_{i+1} \in \beta_{p_m,m+1}$ yet $\overline{c}_i \concat \overline{c}_{i+1} \notin \beta^*_{m+1}$.  Thereby, $N_{p,\gamma,b,\overline{c}_i} = N_{p,\gamma,b,\overline{c}_i \concat \overline{c}_{i+1}} = N_{p,\gamma,b,\overline{c}_{i+1}}$.  That is, $\overline{c}_i$ and $\overline{c}_{i+1}$ generate the same irreducibles of $\gamma(x_0; x_1, b)$ with respect to $p$.  Hence, there is a bijection between $\mu_{p,\gamma,b,\overline{c}}$ and $\mu_{p,\gamma,b,\overline{c}'}$, namely
 \[
  \rho_{p,\gamma,b,\overline{c}_0,\overline{c}_1} \circ ... \circ \rho_{p,\gamma,b,\overline{c}_{k-1},\overline{c}_k}.
 \]
 Moreover, if $m < n-1$, there is a surjection of $\mu_{p,\gamma,b,\overline{c}''}$ onto $\mu_{p,\gamma,b,\overline{c}}$ by composing the bijection with $\pi_{p,\gamma,b,\overline{c}'',\overline{c}}$.  By induction, this gives us a surjection from $\mu^*$ onto $\mu_{p,\gamma,b,\overline{c}}$.  Let $s'$ be the image of $s$ under this surjection and set $s_*(b',\gamma') = s'(i,\gamma')$, where $b'$ is the $i$th element of $\overline{c}$.  This concludes the construction of $s_*$.

 It is straightforward to check that this works.  Thus, $p$ decides generic $\Gamma_0$-types of $\gamma(x_0; x_1, b)$ over $B$.
\end{proof}

We immediately get the following corollary.

\begin{cor}\label{Cor_ConstrWorks}
 For all $p(x_1) \in S$, for all $a_1, a_1' \models p$, $\Vir(a_1, B) = \Vir(a_1', B)$.
\end{cor}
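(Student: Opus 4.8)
The goal is to prove Corollary~\ref{Cor_ConstrWorks}: for all $p(x_1) \in S$ and all $a_1, a_1' \models p$, we have $\Vir(a_1, B) = \Vir(a_1', B)$. The plan is to deduce this almost immediately from Lemma~\ref{Lem_ConstrWorks} together with the definitions of $\Vir(a_1,B)$ and the coding formulas $\psi_{\gamma,n,m,\mu}$. First I would recall that $\Vir(a_1,B)$ consists of the types $\nu_{a_1,c}(x_0;x_1)$ for $c \in \Mon_z$, together with $\nu_0$; and each $\nu_{a_1,c}$ is entirely determined by which formulas $\gamma(x_0;x_1,b)$ (for $b \in B$, $\gamma \in \Gamma_0$) the ball $\delta_0(x_0;c)$ implies — i.e., by the generic $\Gamma_0$-type over $a_1 \concat B$ generated by $\delta_0(x_0;c)$. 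So $\Vir(a_1,B)$ is completely recovered from the collection of these generic $\Gamma_0$-types. Since $\nu_0$ appears in both and depends on nothing, it suffices to show that the set of generic $\Gamma_0$-types over $a_1\concat B$ coming from components of instances $\gamma(x_0;a_1,b)$ agrees with the one coming from $a_1'$, as $\gamma$ ranges over $\Gamma_0$ and $b$ over $B$ — because every instance of $\delta_0$ whose generic type is relevant (i.e., nonzero as a $\Gamma_0$-type) is a component of some $\gamma(x_0;a_1,b)$ up to $T$-equivalence. Actually the cleanest route: $S_{\Gamma_0}(B)$ is contained in $\bigcup_{a_1} \Vir(a_1,B)$ by \eqref{Eq_VirtualTypes}, but for equality of $\Vir(a_1,B)$ and $\Vir(a_1',B)$ themselves I want to argue directly from the $\psi_{\gamma,n,m,\mu}$ formulas.

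The key step is this. Fix $\gamma \in \Gamma_0$ and $b \in B$. By Lemma~\ref{Lem_ConstrWorks}, since $p \in S$ restricts to a type in $S_{N_0} = S_{\Psi_{2^{N_0}}}(b \concat \beta_{\gamma,b})$ (because $\beta \supseteq b\concat\beta_{\gamma,b}$ and $\Psi$ extends $\Psi_{2^{N_0}}$ by repetition), $p$ decides generic $\Gamma_0$-types of $\gamma(x_0;x_1,b)$ over $B$: there exist $n < \omega$ and $\mu \subseteq {}^{B \times \Gamma_0}2$ with $p(x_1) \vdash \psi_{\gamma,n,|B|,\mu}(x_1,b,B)$. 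Now unwind what $\models \psi_{\gamma,n,|B|,\mu}(a_1,b,B)$ says: it says $\psi'_{\gamma,n}(a_1,b)$ holds (so $\gamma(x_0;a_1,b)$ has exactly $n$ components, $n\le N_0$) and there exist $c_0,\dots,c_{n-1}$ witnessing $\gamma(x_0;a_1,b) \leftrightarrow \bigvee_{i<n}\delta_0(x_0;c_i)$ such that $\mu = \{\nu(i) : i < n\}$ where $\nu(i)(b',\gamma')$ records whether $\delta_0(x_0;c_i) \to \gamma'(x_0;a_1,b')$. In other words, $\mu$ is exactly the set of generic $\Gamma_0$-types over $a_1 \concat B$ generated by the components of $\gamma(x_0;a_1,b)$, coded as functions $B\times\Gamma_0 \to 2$. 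Since both $a_1$ and $a_1'$ realize $p$, both satisfy the same $\psi_{\gamma,n,|B|,\mu}(x_1,b,B)$, so these sets of coded generic $\Gamma_0$-types coincide for $a_1$ and $a_1'$. Taking the union over all $\gamma \in \Gamma_0$ and $b \in B$ gives that the full collection of generic $\Gamma_0$-types over $a_1\concat B$ arising from components of instances of formulas in $\Gamma_0$ is the same for $a_1$ and $a_1'$.

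Finally I would translate from coded generic $\Gamma_0$-types back to $\Vir$. Every $\nu_{a_1,c}$ in $\Vir(a_1,B)$ other than $\nu_0$ is the generic $\Gamma_0$-type over $a_1 \concat B$ generated by the ball $\delta_0(x_0;c)$; if this type is not $\nu_0$ (i.e., $\delta_0(x_0;c)$ implies some $\gamma'(x_0;a_1,b')$, equivalently lies inside some instance), then up to $T$-equivalence $\delta_0(x_0;c)$ is a component of $\gamma'(x_0;a_1,b')$ for that $\gamma'$ and $b'$ — by minimality in the definition of components and directedness of $\delta_0$, any ball contained in $\gamma'(\Mon;a_1,b')$ is either contained in or disjoint from each component, and if it meets $\gamma'$ it is contained in a component; but I only need it to generate the same $\Gamma_0$-type as one of the components or as $\nu_0$. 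More carefully: $\nu_{a_1,c}$ is determined by the function $(b',\gamma')\mapsto [\models\forall x_0(\delta_0(x_0;c)\to\gamma'(x_0;a_1,b'))]$, and if this function is not identically $0$ then there is $\gamma',b'$ with $\delta_0(\Mon;c)\subseteq\gamma'(\Mon;a_1,b')$, so by directedness $\delta_0(\Mon;c)$ is contained in a unique component of $\gamma'(x_0;a_1,b')$, and the containment pattern of $\delta_0(x_0;c)$ among all $\gamma''(x_0;a_1,b'')$ is the same as that of that component (since the component contains $\delta_0(\Mon;c)$ and is the smallest such ball among the components — here one needs that balls containing each other have nested containment patterns, which is directedness). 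Hence $\nu_{a_1,c}$ equals $\nu_{a_1,c'}$ for $c'$ a parameter of that component, so $\Vir(a_1,B) \setminus \{\nu_0\}$ equals the set of (decoded) generic $\Gamma_0$-types from components, which I just showed is the same for $a_1$ and $a_1'$; and $\nu_0$ is literally the same formula set for both. Therefore $\Vir(a_1,B) = \Vir(a_1',B)$. The main obstacle is the bookkeeping in this last translation — being careful that every relevant ball's generic $\Gamma_0$-type is captured by some component's code — rather than any substantive new idea, since the heavy lifting was done in Lemma~\ref{Lem_ConstrWorks}.
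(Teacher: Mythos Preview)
Your proof is correct and follows the same approach as the paper: invoke Lemma~\ref{Lem_ConstrWorks} to obtain, for each $b\in B$ and $\gamma\in\Gamma_0$, a code $\mu_{b,\gamma}$ depending only on $p$, and then observe that $\Vir(a_1,B)$ is exactly $\{\nu_0\}$ together with the types encoded by the various $\mu_{b,\gamma}$ (the paper compresses your careful translation step into a single ``unraveling the definition''). One small tightening for your last paragraph: when matching an arbitrary $\nu_{a_1,c}\neq\nu_0$ to a component's generic type, choose $\gamma',b'$ so that the component of $\gamma'(x_0;a_1,b')$ containing $\delta_0(\Mon;c)$ is minimal over \emph{all} choices of $\gamma',b'$, not just within one fixed instance---then the claimed equality of containment patterns follows immediately from directedness.
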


\begin{proof}
 By Lemma \ref{Lem_ConstrWorks}, for each $b \in B$ and $\gamma \in \Gamma_0$, there exists $n < \omega$ and $\mu_{b,\gamma} \subseteq {}^{B \times \Gamma_0} 2$ such that
\[
 p(x_1) \vdash \psi_{\gamma,n,|B|,\mu_{b,\gamma}}(x_1, b, B).
\]
Therefore, $\models \psi_{\gamma,n,|B|,\mu_{b,\gamma}}(a_1,b,B)$.  Unraveling the definition, we obtain
 \begin{align*}
  \Vir & (a_1, B) = \{ \{ \neg \gamma'(x_0; x_1, b') : b' \in B, \gamma' \in \Gamma_0 \} \} \cup \\ & \{ \{ \gamma'(x_0; x_1, b')^{s(b',\gamma')} : b' \in B, \gamma' \in \Gamma_0 \} : b \in B, \gamma \in \Gamma_0, s \in \mu_{b,\gamma} \}.
 \end{align*}
 Since this also holds for $a_1'$, we get $\Vir(a_1, B) = \Vir(a_1', B)$.
\end{proof}

In light of this, for any $p \in S$, define $\Vir(p, B) = \Vir(a_1, B)$ for any (equivalently all) $a_1 \models p$.

Immediately from this corollary and \eqref{Eq_VirtualTypes} we get that
\[
 |S_\varphi(B)| \le |S_{\Gamma_0}(B)| \le (N_0 \cdot |B| \cdot |\Gamma_0| + 1) \cdot |S|.
\]
By VC-minimality, we know that $|S| \le N_1 |\beta| + 1$, thus we obtain
\[
 |S_\varphi(B)| = \BigO( |B| \cdot |\beta| ).
\]
\emph{A priori} there is no good bound on the size of $\beta$, so this is not immediately helpful.  However, using the trick we employed in Section \ref{Sect_FullyVCMin}, namely Lemma \ref{Lem_SumDist2}, we will obtain a bound on the order of $|B|^2$.

Define constants $K_n$ recursively as follows: Let $K_0 = 1$ and, if $K_n$ is given, let $K_{n+1}$ be such that
\[
 K_{n+1} := 2 K_n ( 1 + 3^n N_1 | \Psi | ).
\]
Finally, let
\[
 K := 3 N_0 N_1 K_{N_0} \cdot | \Psi | \cdot |\Gamma_0|.
\]
Note that these are all independent of $B$.

For the next two lemmas, fix $b \in B$ and $\gamma \in \Gamma_0$.  For the next lemma, consider $n \le N_0$.  For each $\psi \in \Psi_{2^n}$ and $\overline{c} \in \beta_n$, there exists $D_{\psi,\overline{c}} \subseteq \Mon_u$ with $|D_{\psi,\overline{c}}| \le N_1$ so that $\psi(x_1; b, \overline{c})$ is a boolean combination of $\delta_1(x_1; d)$ for $d \in D_{\psi,\overline{c}}$.  Let $D := \bigcup \{ D_{\psi, \overline{c}} : \psi \in \Psi_{2^n}, \overline{c} \in \beta_n \}$.  Hence, for any $q \in S_n$, there exists $p \in S_{\delta_1}(D)$ such that $p(x_1) \vdash q(x_1)$.  Then, we get the following:

\begin{lem}\label{Lem_ShortDist}
 For all $q_0, q_1 \in S_n$ and $p_0, p_1 \in S_{\delta_1}(D)$ with $p_0 \vdash q_0$ and $p_1 \vdash q_1$, we have
 \[
  | \beta_{q_1,n+1} \setminus \beta_{q_0,n+1} | \le 3^n \cdot \dist(p_0, p_1).
 \]
\end{lem}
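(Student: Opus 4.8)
The plan is to prove Lemma~\ref{Lem_ShortDist} by induction on $n$, unwinding the recursive definition of the sets $\beta_n$ and $\beta_{q,n+1}$. Three ingredients drive the argument. First, whether a tuple $\overline{c}$ lies in $\beta^{q}_{r,n}$ is, beyond its dependence on $\beta_{r,n}$ itself, decided by which of the formulas $\psi_{\gamma,\cdot,\cdot,\cdot}(x_1;b,\overline{c}')$ belong to $q$, for $\overline{c}'$ ranging over $\overline{c}$ and its two halves; in particular this is read off from the $\delta_1$-type $p$. Second, each such $\psi$-instance is a Boolean combination of formulas $\delta_1(x_1;d)$ with $d$ in a block $D_{\psi,\overline{c}'}$, and we may arrange the blocks to be pairwise disjoint over all pairs $(\psi,\overline{c}')$, so that flipping the truth value of a single $\delta_1(x_1;d)$ flips at most one $\psi$-instance; moreover a change in any of the integers $N_{q,\gamma,b,\cdot}$ forces at least \emph{two} such flips (the instance recording the old value turns false and the instance recording the new value turns true). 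Third, replacing a linearly ordered set $A$ by its set $P(A)$ of $<$-consecutive pairs blows up symmetric differences by a bounded factor: deleting or inserting one point of $A$ alters $P(A)$ by at most three pairs, and more precisely $|P(A_1)\setminus P(A_0)|\le 2\,|A_1\setminus A_0|+|A_0\setminus A_1|$ (each point of $A_1\setminus A_0$ is in at most two pairs of $P(A_1)$, each point of $A_0\setminus A_1$ lies in at most one ``gap'' of $A_1$).

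For the base case $n=0$ we have $\beta^{q_i}_0=\{b'\in B:N_{q_i,\gamma,b,b'}>1\}$. Each $b'\in\beta^{q_0}_0\triangle\beta^{q_1}_0$ forces, by the second ingredient, at least two flipped $\delta_1$-instances among the disjoint blocks attached to $b'$, and distinct $b'$ consume disjoint parts of $\diff(p_0,p_1)$; hence $|\beta^{q_0}_0\triangle\beta^{q_1}_0|\le\tfrac12\dist(p_0,p_1)$. The third ingredient then gives $|\beta_{q_1,1}\setminus\beta_{q_0,1}|\le 2\,|\beta^{q_1}_0\setminus\beta^{q_0}_0|+|\beta^{q_0}_0\setminus\beta^{q_1}_0|\le 2\,|\beta^{q_0}_0\triangle\beta^{q_1}_0|\le\dist(p_0,p_1)=3^0\dist(p_0,p_1)$.

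For the inductive step, let $r_0,r_1\in S_{n-1}$ be the types with $q_i\vdash r_i$, and let $p_i'$ be the restriction of $p_i$ to the stage-$(n-1)$ parameter set $D^{(n-1)}$, arranged so that $D^{(n-1)}\subseteq D$ and so that the ``inherited'' $\psi$-instances (those over diagonal tuples $\overline{c}\concat\overline{c}$) use only parameters from $D^{(n-1)}$; then $p_i'\vdash r_i$ and $\dist(p_0',p_1')\le\dist(p_0,p_1)$. Split $\diff(p_0,p_1)$ as the disjoint union of its ``old'' part $\diff(p_0',p_1')$ and a ``fresh'' part of separating balls over parameters appearing only at stage $n$. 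Applying the inductive hypothesis in both directions bounds $|\beta_{r_1,n}\setminus\beta_{r_0,n}|$ and $|\beta_{r_0,n}\setminus\beta_{r_1,n}|$ by $3^{n-1}\dist(p_0',p_1')$, so (third ingredient, applied to $A_i=\beta^{q_i}_{r_i,n}$, with $A_i\subseteq\beta_{r_i,n}$) the contribution of the tuples already separated at stage $n-1$ is at most $3^n\dist(p_0',p_1')$; the remaining tuples on which membership in $\beta^{q_i}_{r_i,n}$ differs lie in $\beta_{r_0,n}\cap\beta_{r_1,n}$ with the defining inequality $\Theta_{q_i}$ (``$N_{q_i,\gamma,b,\overline{c}}>N_{q_i,\gamma,b,\overline{c}_0}$ or $>N_{q_i,\gamma,b,\overline{c}_1}$'') disagreeing, and by the second ingredient each such tuple consumes at least two fresh separating balls, with bounded multiplicity since a genuinely-new stage-$n$ tuple occurs as the first coordinate of only a bounded number of the relevant $\psi$-instances. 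Adding these, the fresh part absorbs the extra term and $|\beta_{q_1,n+1}\setminus\beta_{q_0,n+1}|\le 3^n\dist(p_0,p_1)$.

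The main obstacle is keeping the constant exactly $3^n$ rather than something larger: one must choose the blocks $D_{\psi,\overline{c}}$ pairwise disjoint so that flips do not multiply, check that the linear orders on the various $\beta^{q}_{r,n}$ are compatible restrictions of one order on $\beta_n$ so that the consecutive-pair comparisons make sense across different lower types $r$, verify $D^{(n-1)}\subseteq D$ so the induction applies with $\dist$ nonincreasing, and --- most importantly --- split $\diff(p_0,p_1)$ cleanly into old and fresh parts so that the factor $2$ from consecutive pairs and the implicit factor $\tfrac12$ from ``two flips per $N$-change'' balance against the geometric factor $3$ at each of the $n$ levels. Each individual estimate is routine; the care lies entirely in this bookkeeping.
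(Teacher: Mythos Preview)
Your approach is genuinely different from the paper's, and as written it has a real gap in the inductive step.

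\textbf{The gap.} In the inductive step you consider tuples $\overline{c}\in\beta_{r_0,n}\cap\beta_{r_1,n}$ on which $\Theta_{q_0}(\overline{c})\ne\Theta_{q_1}(\overline{c})$ and assert that each such tuple ``consumes at least two \emph{fresh} separating balls.'' But $\Theta_q(\overline{c})$ is the disjunction $N_{q,\gamma,b,\overline{c}}>N_{q,\gamma,b,\overline{c}_0}$ or $N_{q,\gamma,b,\overline{c}}>N_{q,\gamma,b,\overline{c}_1}$, and the two quantities $N_{q,\gamma,b,\overline{c}_0}$, $N_{q,\gamma,b,\overline{c}_1}$ are determined by the $\Psi_{2^{n-1}}$-type over $\overline{c}_0,\overline{c}_1\in\beta_{n-1}$ --- by your own setup, these are read off from parameters in $D^{(n-1)}$, i.e.\ from the \emph{old} part of $\diff(p_0,p_1)$. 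So $\Theta(\overline{c})$ can flip because $N_{\cdot,\overline{c}_0}$ changed, with no fresh $\delta_1$-flips at all. Nothing forces $\overline{c}$ to leave $\beta_{r_0,n}\cap\beta_{r_1,n}$ when this happens: $\overline{c}_0$ can remain in $\beta^{r_0}_{\cdot,n-1}\cap\beta^{r_1}_{\cdot,n-1}$ (its own $\Theta$ unchanged) while its $N$-value shifts. Thus your charging scheme fails, and the split of $\diff(p_0,p_1)$ into old/fresh does not close the books at $3^n$. You have already spent the old flips on the inductive hypothesis bounding $|\beta_{r_1,n}\setminus\beta_{r_0,n}|$; spending them again here would blow up the constant. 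A patch is possible (e.g.\ track separately the number of $\overline{c}'\in\beta_{n-1}$ whose $N$-value changes, and carry that count through the induction), but it is no longer the clean $\tfrac12$-versus-$3$ balance you describe.

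\textbf{What the paper does instead.} The paper avoids the old/fresh split entirely. It argues directly: each $d\in D$ lies in a unique block $D_{\psi,\overline{c}}$ (taking the blocks disjoint), so a single $\delta_1$-flip perturbs at most one $\psi$-instance $\psi(x_1;b,\overline{c})$. That instance lives over some $\overline{c}\in\beta_n$ which is the iterated diagonal of a tuple at a unique minimal level $i\le n$; the flip therefore perturbs membership in $\beta^{q_{t,i}}_{q_{t,i-1},i}$ at that one level, and then one simply iterates the rule ``one change in $\beta^{\cdot}_{\cdot,j}$ yields at most three changes in $\beta_{\cdot,j+1}$'' up to level $n+1$. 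No induction on the statement of the lemma, no $\tfrac12$-savings, and no need to separate contributions by level of origin. Your three ingredients are all present in spirit, but the paper treats every flip uniformly rather than partitioning $\diff(p_0,p_1)$, which is exactly what lets it sidestep the double-counting you run into.
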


\begin{proof}
 For $t < 2$, let $q_t = q_{t,n}, ..., q_{t,0}$ be such that $q_{t,i} \in S_i$ and $q_{t,i+1} \vdash q_{t,i}$.  Consider $\psi \in \Psi_{2^n}$ and $\overline{c} \in \beta_n$.  Then $p_0$ and $p_1$ both imply either $\pm \psi(x_1; b, \overline{c})$ and they can disagree only if $\pm \delta(x_1; d) \in \diff(p_0, p_1)$ for some  $d \in D_{\psi,\overline{c}}$.  If they do disagree on $\psi(x_1; b, \overline{c})$, then, for some $i \le n$, this changes at most one element in $\beta^{q_{t,i}}_{q_{t,i-1},i}$ between $t=0$ and $t=1$.  Such a change results in a change of at most $3$ elements in $\beta_{q_{t,i},i+1}$ from $t=0$ and $t=1$ (if say $\overline{c}_0 < \overline{c} < \overline{c}_1$ are consecutive in $\beta^{q_{0,i}}_{q_{0,i-1},i}$ and $\overline{c} \notin \beta^{q_{1,i}}_{q_{1,i-1},i}$, then $\overline{c}_0 \concat \overline{c}_1 \in \beta_{q_{1,i},i+1}$ whereas $\overline{c}_0 \concat \overline{c}, \overline{c} \concat \overline{c}_1 \in \beta_{q_{0,i},i+1}$).  By induction, this causes at most $3^n$ changes in $\beta_{q_t,n+1}$ from $t=0$ and $t=1$.  The conclusion follows.
\end{proof}

With this, we can employ Lemma \ref{Lem_SumDist2} to bound the size of $\beta_n$.

\begin{lem}\label{Lem_SSmall}
 For all $n \le N_0$, $| \beta_n | \le K_n \cdot |B|$.
\end{lem}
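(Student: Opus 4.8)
The plan is to prove Lemma~\ref{Lem_SSmall} by induction on $n \le N_0$, tracking the growth of $|\beta_n|$ as the construction proceeds. For $n = 0$ we have $\beta_0 = B$ and $K_0 = 1$, so $|\beta_0| = |B| = K_0 \cdot |B|$ and the base case is immediate. For the inductive step, assume $|\beta_n| \le K_n \cdot |B|$; we must bound $|\beta_{n+1}|$. Recall that $\beta_{n+1}$ is the union of the diagonal copy $\{ \overline{c} \concat \overline{c} : \overline{c} \in \beta_n \}$, which contributes exactly $|\beta_n| \le K_n|B|$ elements, together with $\bigcup \{ \beta_{q,n+1} : q \in S_n \}$. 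So the heart of the matter is bounding $\sum$ over the distinct sets $\beta_{q,n+1}$, or rather the size of their union.

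The key idea, following the method of Section~\ref{Sect_FullyVCMin}, is to pass to the $\delta_1$-type space. Fix $b \in B$ and $\gamma \in \Gamma_0$ and form $D$ as in the setup preceding Lemma~\ref{Lem_ShortDist}; note $|D| \le N_1 \cdot |\Psi_{2^n}| \cdot |\beta_n| \le N_1 |\Psi| K_n |B|$, which is $\BigO(|B|)$. Now apply Lemma~\ref{Lem_SumDist2} to $D$ and $\delta_1$: there is a linear order on $\Vir_{\delta_1}(D)$ and an enumeration $p_0 < \dots < p_M$ of $S_{\delta_1}(D)$ with $\sum_{i < M} \dist(p_i, p_{i+1}) \le 2|D| \le 2 N_1 |\Psi| K_n |B|$. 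For each $i$, Lemma~\ref{Lem_ShortDist} gives $|\beta_{q_{i+1},n+1} \setminus \beta_{q_i,n+1}| \le 3^n \dist(p_i, p_{i+1})$, where $q_i \in S_n$ is the type below $p_i$. Telescoping along this chain,
\[
 \left| \bigcup_{q \in S_n} \beta_{q,n+1} \right| \le |\beta_{q_0,n+1}| + \sum_{i<M} 3^n \dist(p_i,p_{i+1}) \le |\beta_{q_0,n+1}| + 3^n \cdot 2 N_1 |\Psi| K_n |B|.
\]
Here one must observe that every $q \in S_n$ appears as some $q_i$ since every $q$ has some $p \in S_{\delta_1}(D)$ above it, so the union over the chain really does capture all of $\bigcup_q \beta_{q,n+1}$. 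Bounding the initial term $|\beta_{q_0,n+1}|$ crudely by $|\beta_n|^2$ is wasteful, but in fact $\beta_{q_0,n+1} \subseteq \{ \overline{c} \concat \overline{c}' : \overline{c}, \overline{c}' \in \beta^{q_0}_{\cdot,n} \text{ consecutive} \}$ has at most $|\beta_n|$ elements (consecutive pairs in a linear order), so $|\beta_{q_0,n+1}| \le K_n|B|$.

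Putting the pieces together and summing over the $b \in B$, $\gamma \in \Gamma_0$ that index the construction — this multiplies by a further factor absorbed into the constant — we get
\[
 |\beta_{n+1}| \le |\beta_n| + |\beta_n| + 2 \cdot 3^n N_1 |\Psi| K_n |B| \le 2 K_n |B| ( 1 + 3^n N_1 |\Psi| ) = K_{n+1} |B|,
\]
matching the recursive definition of $K_{n+1}$. The main obstacle I anticipate is the bookkeeping in the telescoping step: one must be careful that the ``diagonal'' elements are not double-counted, that the crude bound $|\beta_{q,n+1}| \le |\beta_n|$ for consecutive pairs is applied correctly at the base of the chain rather than a quadratic bound, and that the dependence of $D$ (hence of the enumeration $p_0 < \dots < p_M$) on the fixed $b$ and $\gamma$ is handled uniformly before taking the final union over all $b \in B$ and $\gamma \in \Gamma_0$. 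Everything else is routine arithmetic with the constants $K_n$.
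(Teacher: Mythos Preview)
Your argument is correct and follows the paper's proof essentially line for line: induction on $n$, pass to the $\delta_1$-type space over $D$ with $|D| \le N_1|\Psi|K_n|B|$, apply Lemma~\ref{Lem_SumDist2} to bound the telescoping sum via Lemma~\ref{Lem_ShortDist}, bound the initial term $|\beta_{q_0,n+1}|$ by $|\beta_n|$ (consecutive pairs in a linear order), add the diagonal, and check the recursion for $K_{n+1}$.

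One point of confusion to correct: your remark about ``summing over the $b \in B$, $\gamma \in \Gamma_0$ that index the construction'' is misplaced. The entire construction of $\beta_0, \ldots, \beta_{N_0}$, and hence this lemma, is carried out for a \emph{fixed} $b$ and $\gamma$ (the paper says so explicitly just before Lemma~\ref{Lem_ConstrEnds}). The union over $b$ and $\gamma$ only enters later, when assembling $\beta = \bigcup_{b,\gamma} b \concat \beta_{\gamma,b}$, and that is where the extra factor of $|\Gamma_0|\cdot|B|$ appears. Your actual arithmetic ignores this phantom sum anyway and is correct as written; just delete that clause.
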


\begin{proof}
 We prove this by induction on $n$.  For $n=0$, $\beta_0 = B$, hence $| \beta_0 | = |B|$, as desired.

 Construct $D$ for $\beta_n$ as above.  In particular,
 \[
  |D| \le N_1 \cdot |\Psi| \cdot K_n \cdot |B|
 \]
 and, for each type $q \in S_n$, there exists $p \in S_{\delta_1}(D)$ such that $p \vdash q$.  Let $q_0, q_1, ..., q_{m-1} \in S_n$ be an enumeration of $S_n$ and, for each $t < m$, choose $p_t \in S_{\delta_1}(D)$ such that $p_t \vdash q_t$.  By Lemma \ref{Lem_ShortDist},
 \[
  | \beta_{q_{t+1},n+1} \setminus \beta_{q_t,n+1} | \le 3^n \cdot \dist(p_t, p_{t+1}).
 \]
 Moreover, by definition of $\beta_{n+1}$, we get
 \[
  \beta_{n+1} = \beta_{q_0,n+1} \cup \bigcup_{t < m-1} (\beta_{q_{t+1},n+1} \setminus \beta_{q_{t},n+1}) \cup \{ \overline{c} \concat \overline{c} : \overline{c} \in \beta_n \}.
 \]
 By Lemma \ref{Lem_SumDist2} on $\langle p_t : t < m \rangle$ (reordering so that these form a consecutive sequence), we get
 \[
  | \beta_{n+1} | \le | \beta_{q_0,n+1} | + 3^n \cdot 2 \cdot |D| + | \beta_n |.
 \]
 Now $| \beta_{q_0,n+1} | \le | \beta_n | \le K_n \cdot |B|$ by induction.  Hence,
 \[
  | \beta_{n+1} | \le 2 \cdot K_n \cdot |B| + 3^n \cdot 2 \cdot N_1 \cdot |\Psi| \cdot K_n \cdot |B|.
 \]
 Thereby, $| \beta_{n+1} | \le K_{n+1} \cdot |B|$.
\end{proof}

Without any further work, we obtain the fact that $| \beta_{\gamma,b} | \le K_{N_0} |B|$, hence
\[
 | \beta | \le K_{N_0} \cdot |\Gamma_0| \cdot |B|^2,
\]
showing indeed that $| \beta | = \BigO(|B|^2)$.  \emph{A priori}, this gives us the bound
\[
 | S_\varphi(B) | = \BigO(|B|^3).
\]
With another application of Lemma \ref{Lem_SumDist2}, we can get the desired result.

\begin{lem}\label{Lem_Conclusion}
 We have that $| S_\varphi(B) | \le K|B|^2$.
\end{lem}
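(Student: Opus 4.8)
The plan is to mimic exactly the counting trick used at the end of Section \ref{Sect_FullyVCMin}, now applied to the $\Psi$-type space $S = S_\Psi(\beta)$ instead of a $\Delta_1$-type space. We already know from Corollary \ref{Cor_ConstrWorks} and \eqref{Eq_VirtualTypes} that
\[
 |S_\varphi(B)| \le |S_{\Gamma_0}(B)| \le \left| \bigcup_{p \in S} \Vir(p, B) \right| \cdot (\text{constant}),
\]
so it suffices to bound $\left| \bigcup_{p \in S} \Vir(p, B) \right|$ by $\BigO(|B|^2)$. The key point, exactly as before, is that although $|S|$ itself is only $\BigO(|\beta|) = \BigO(|B|^2)$ and each $\Vir(p,B)$ has size $\BigO(|B|)$ — which would naively give $\BigO(|B|^3)$ — consecutive types in a well-chosen linear order on $S$ contribute very little new to the union.

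First I would, via VC-minimality, pass from $S = S_\Psi(\beta)$ to a $\delta_1$-type space: there is $D \subseteq \Mon_u$ with $|D| \le N_1 |\Psi| |\beta|$ such that every $q \in S$ is implied by some $p \in S_{\delta_1}(D)$. Apply Lemma \ref{Lem_SumDist2} to $D$ and $\{\delta_1\}$ to get a linear order on $\Vir_{\delta_1}(D)$ with $\sum_i \dist(p_i, p_{i+1}) \le 2|D|$. Enumerate $S_{\delta_1}(D)$ as a consecutive subsequence $p_0 < \dots < p_m$ inside this order, and for each pick the corresponding $q_t \in S$ with $p_t \vdash q_t$. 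The heart of the argument is then to bound $|\Vir(q_{t+1},B) \setminus \Vir(q_t,B)|$ by a constant times $\dist(p_t,p_{t+1})$: when $p_t$ and $p_{t+1}$ differ only on $\dist(p_t,p_{t+1})$ instances of $\delta_1$, they can disagree on at most that many instances $\psi(x_1; b, \overline{c})$ of formulas in $\Psi$ (each such instance being a boolean combination of $\le N_1$ instances of $\delta_1$, so a crude count gives a factor $N_1 |\Psi|$ or so). Each disagreement on a single $\psi_{\gamma,n,m,\mu}(x_1; b, \overline{c})$ changes the code $\mu_{q,\gamma,b,\overline{c}}$ for one tuple $\overline{c}$, and by the gluing construction of Lemma \ref{Lem_ConstrWorks} — where the global code $\mu_{b,\gamma}$ over all of $B$ is assembled from the codes over the $\overline{c}$'s in the $\beta_n$'s via the bijections $\rho$ and surjections $\pi$ — changing one local code alters the global $\mu_{b,\gamma}$ in at most $\BigO(N_0)$ positions, hence adds at most $\BigO(N_0)$ new elements to $\Vir(q_{t+1}, B)$. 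Summing over $t$ and using $\sum_t \dist(p_t, p_{t+1}) \le 2|D| \le 2 N_1 |\Psi| |\beta| = \BigO(|B|^2)$, together with $|\Vir(q_0,B)| \le N_0 |B| |\Gamma_0| + 1$, yields $\left| \bigcup_p \Vir(p,B) \right| = \BigO(|B|^2)$, and folding in the initial inequality and chasing the constants through the recursion defining $K_n$ and $K$ gives $|S_\varphi(B)| \le K |B|^2$.

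The main obstacle is the step that tracks how a single change in a local code $\mu_{q,\gamma,b,\overline{c}}$ propagates through the hierarchical gluing of Lemma \ref{Lem_ConstrWorks} to the global code $\mu_{b,\gamma}$ over $B$. One has to verify that flipping the $\Psi$-type on one instance $\psi(x_1; b, \overline{c})$ — equivalently, moving one tuple $\overline{c}$ in or out of some $\beta^q_{p,i}$ — affects only a bounded (independent of $|B|$) number of the "consecutive tuple" chains used to build $s_*$, and hence only a bounded number of entries of each $s_* \in \mu_{b,\gamma}$, so that the symmetric difference $\Vir(q_{t+1},B) \triangle \Vir(q_t,B)$ stays $\BigO(N_0)$; this is the analogue of the "at most one new virtual $\Delta_0$-type" observation in the fully-VC-minimal case, but now complicated by the $N_0$ layers of the construction and the fact that several components of $\gamma(x_0; x_1, b)$ must be tracked simultaneously. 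Once that propagation bound is in hand, the rest is a routine summation plus bookkeeping of the constants $K_n$, and invoking Lemma \ref{Lem_SumDist2} one last time.
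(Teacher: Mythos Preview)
Your overall strategy is exactly the paper's: pass from $S = S_\Psi(\beta)$ to $S_{\delta_1}(D)$ with $|D| \le N_1 |\Psi|\,|\beta|$, enumerate, apply Lemma~\ref{Lem_SumDist2}, and bound $|\Vir(q_{t+1},B) \setminus \Vir(q_t,B)|$ by $N_0 \cdot \dist(p_t,p_{t+1})$; the telescoping sum then gives $\BigO(|B|^2)$.

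Where you go astray is in the justification of the bound $|\Vir(q_{t+1},B) \setminus \Vir(q_t,B)| \le N_0 \cdot \dist(p_t,p_{t+1})$. You cast this as a ``propagation bound'': that flipping one instance $\psi(x_1; b, \overline{c})$ alters only boundedly many \emph{entries} of each $s_* \in \mu_{b,\gamma}$, because it disturbs only boundedly many of the consecutive-tuple chains in the gluing of Lemma~\ref{Lem_ConstrWorks}. That claim is actually false, and fortunately also unnecessary. It is false because a flip at (say) the base tuple $\overline{c}^*$ can change the starting code $\mu^*$ and hence the values $s_*(b',\gamma')$ for \emph{every} $b' \in B$ reached through the chains; there is no $|B|$-independent bound on the number of changed entries. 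It is unnecessary because you are not counting entries of the functions $s_*$ --- you are counting \emph{elements} of $\Vir(q,B)$, i.e., whole virtual types.

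The correct (and much shorter) argument is this. A disagreement on a single instance $\psi_{\gamma,n,m,\mu}(x_1; b, \overline{c})$ can affect the construction of $\mu_{b',\gamma'}$ only for the one pair $(b',\gamma') = (b,\gamma)$: the gluing in Lemma~\ref{Lem_ConstrWorks} for $(b',\gamma')$ uses only instances $\psi_{\gamma',\ldots}(x_1; b',\ldots)$, so other pairs are untouched. Now regardless of how drastically $\mu_{b,\gamma}$ changes, one always has $|\mu_{b,\gamma}| \le N_0$, so at most $N_0$ new virtual types can appear in $\Vir(q_{t+1},B)$ from this pair. No analysis of the internal chain structure is required. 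With this in hand, each $d \in \diff(p_t,p_{t+1})$ (taking the $D_{\psi,\overline{c}}$ as disjoint, so that $d$ determines a single $(\psi,\overline{c})$, hence a single $(b,\gamma)$) contributes at most $N_0$ new virtual types, giving the inequality you want; the rest of your outline then goes through verbatim and matches the paper's proof.
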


\begin{proof}
 As before, for each $\psi \in \Psi$ and $\overline{c} \in \beta$, let $D_{\psi, \overline{c}} \subseteq \Mon_u$ with $|D_{\psi, \overline{c}}| \le N_1$ be such that, $\psi(x_1; \overline{c})$ is a boolean combination of $\delta_1(x_1; d)$ for $d \in D_{\psi,\overline{c}}$.  Let $D := \bigcup \{ D_{\psi, \overline{c}} : \psi \in \Psi_{2^{N_0}}, \overline{c} \in \beta \}$.  As before,
 \[
  |D| \le N_1 \cdot | \Psi | \cdot K_{N_0} \cdot |\Gamma_0| \cdot |B|^2.
 \]
 Again let $q_0, ..., q_{m-1} \in S$ enumerate $S$ and, for each $t < m$, choose $p_t \in S_{\delta_1}(D)$ such that $p_t \vdash q_t$.

 Consider $\psi \in \Psi$ and $\overline{c} \in \beta$.  If $p_t$ and $p_{t+1}$ disagree on $\psi(x_1; \overline{c})$, we must have $\pm \delta_1(x_1; d) \in \diff(p_t, p_{t+1})$ for some $d \in D_{\psi, \overline{c}}$.  Moreover, one such disagreement yields a change of at most $N_0$ elements between $\Vir(q_t, B)$ and $\Vir(q_{t+1}, B)$, namely possibly changing the generic $\Gamma_0$-types corresponding to $b$ and $\gamma$ (when $\psi(x_1; \overline{c}) = \psi_{\gamma,n',m',\mu}(x_1, b, \overline{c}')$).  Therefore,
 \[
  | \Vir(q_{t+1}, B) \setminus \Vir(q_t, B) | \le N_0 \cdot \dist(p_t, p_{t+1}).
 \]
 Since
 \[
  S_{\Gamma_0}(B) \subseteq \bigcup_{t<m} \Vir(q_t, B),
 \]
 Lemma \ref{Lem_SumDist2} yields
 \[
  | S_{\Gamma_0}(B) | \le | \Vir(q_0, B) | + N_0 ( 2 \cdot |D| + 2 ).
 \]
 Therefore,
 \[
  | S_{\Gamma_0}(B) | \le N_0 \cdot |\Gamma_0| \cdot |B| + 1 + 2 N_0 N_1 K_{N_0} \cdot | \Psi | \cdot |\Gamma_0| \cdot |B|^2.
 \]
 Since $|S_\varphi(B)| \le S_{\Gamma_0}(B)$, we get
 \[
  | S_\varphi(B) | \le K |B|^2.
 \]
\end{proof}

As $K$ did not depend on our choice of $B$, we get $| S_\varphi(B) | = \BigO( |B|^2 )$.  As $\varphi(x;y)$ was arbitrary with $|x| = 2$, this shows that $\pi_T(2) = 2$.  This concludes the proof of Theorem \ref{Thm_VCminVCden}.

\subsection{Conclusion}

Although it may be tempting to suppose that, using induction with the above proof, we should be able to get $\pi_T(n) = n$, this does not work with the current framework.  It is vital that both $x_0$ and $x_1$ be singletons in the above argument.  The reason induction works in Section \ref{Sect_FullyVCMin} is because $\varphi$-types correspond to generic types of balls over the same set in question.  To describe a $\varphi(x; y)$-type over $B$, one needs only $|x|$ elements \emph{from} $B$.  However, in the general argument above, $\beta \subseteq B^{2^{N_0} + 1}$.  In order to bound the size of $\beta$ (e.g., in Lemma \ref{Lem_SSmall}), we need to know \emph{a priori} that $x_1$ is a singleton.  Still, it is the hope of this author that some modification of this proof will provide a positive answer to Open Question \ref{Ques_vcminVCden}.

\begin{bibdiv}
\begin{biblist}

\bib{adl}{article}{
   author={Adler, Hans},
   title={Theories controlled by formulas of Vapnik-Chervonenkis codimension 1},
   journal={Preprint},
   date={2008}
}

\bib{adhms}{article}{
   author={Aschenbrenner, A.},
   author={Dolich, A.},
   author={Haskell, D.},
   author={MachPherson, H.D.},
   author={Starchenko, S.},
   title={Vapnik-Chervonenkis density in some theories without the independence property, I},
   journal={Preprint},
   date={2011},
}

\bib{adhms2}{article}{
   author={Aschenbrenner, A.},
   author={Dolich, A.},
   author={Haskell, D.},
   author={MachPherson, H.D.},
   author={Starchenko, S.},
   title={Vapnik-Chervonenkis density in some theories without the independence property, II},
   journal={Notre Dame J. Formal Logic},
	 volume={54},
	 date={2013},
	 pages={311-363}
}

\bib{cs}{article}{
   author={Cotter, Sarah},
   author={Starchenko, Sergei},
   title={Forking in VC-minimal theories},
   journal={J. Symbolic Logic},
   volume={77},
	 date={2012},
	 pages={1257-1271}
} 

\bib{fg}{article}{
   author={Flenner, Joseph},
   author={Guingona, Vincent},
   title={Canonical forests in directed families},
   journal={Proc. Amer. Math. Soc.},
	 volume={142},
	 date={2014},
	 pages={1849-1860}
}

\bib{fg2}{article}{
   author={Flenner, Joseph},
   author={Guingona, Vincent},
   title={Convexly orderable groups and valued fields},
   journal={J. Symbolic Logic},
	 volume={79},
	 date={2014},
	 pages={154-170}
}

\bib{gh}{article}{
   author={Guingona, Vincent},
	 author={Hill, Cameron D.},
	 title={On VC-Density Over Indiscernible Sequences},
	 journal={Math. Logic Quart.},
	 volume={60},
	 date={2014},
	 pages={59-65}
}

\bib{gl}{article}{
   author={Guingona, Vincent},
   author={Laskowski, M.C.},
   title={On VC-Minimal Theories and Variants},
   journal={Arch. Math. Logic},
	 volume={52},
	 number={7},
   date={2013},
	 pages={743-758}
}

\bib{jl}{article}{
   author={Johnson, H.R.},
	 author={Laskowski, M.C.},
	 title={Compression schemes, stable definable families, and o-minimal structures},
	 journal={Discrete Comput. Geom.},
	 volume={43},
	 date={2010},
	 number={4},
	 pages={914–926}
}

\bib{kou}{article}{
   author={Kaplan, Itay},
	 author={Onshuus, Alf},
	 author={Usvyatsov, Alexander},
	 title={Additivity of the dp-rank},
	 journal={Trans. Amer. Math. Soc.},
	 volume={365},
	 date={2013},
	 pages={5783-5804}
}

\bib{l92}{article}{
   author={Laskowski, M.C.},
	 title={Vapnik-Chervonenkis classes of definable sets},
	 journal={J. London Math. Soc.},
	 volume={45},
	 number={2},
	 date={1992},
	 pages={377–384}
}

\end{biblist}
\end{bibdiv}

\end{document}